\newtheorem{theorem}{Theorem}[section]
\newtheorem{lemma}[theorem]{Lemma}
\newtheorem{proposition}[theorem]{Proposition}
\newtheorem{corollary}[theorem]{Corollary}
\theoremstyle{definition}
\newtheorem{example}[theorem]{Example}
\theoremstyle{remark}
\newtheorem{remark}[theorem]{Remark}
\numberwithin{equation}{section}
\begin{document}

\noindent {\footnotesize}

\title[On the semi-regular]
{On the semi-regular frames of translates}

\author[F. Valizadeh, H. Rahimi, R. A. Kamyabi Gol, F. Esmaeelzadeh]{F. Valizadeh$^{1}$, H. Rahimi$^{2, \ast}$, R. A. Kamyabi Gol$^{3}$, F. Esmaeelzadeh$^{4}$}
\address{$^{1}$ Department of Mathematics, Faculty of Science, Central Tehran
 Branch, Islamic Azad University, Tehran, Iran.;
\newline
} \email{fahimevalizade@yahoo.com}
\address{$^{2}$ Department of Mathematics, Faculty of Science, Central Tehran
 Branch, Islamic Azad University, Tehran, Iran.;
\newline
}  \email{rahimi@iauctb.ac.ir}

\address{$^{3}$ Department of Pure
Mathematics, Ferdowsi University of Mashhad, and Center of
Excellence in Analysis on Algebraic Structures (CEAAS), P.O.Box
1159-91775, Mashhad, Iran;
\newline
}  \email{kamyabi@um.ac.ir}

\address{$^{4}$ Department of Mathematics, Bojnourd Branch, Islamic Azad University,  Bojnourd, Iran;
\newline
} \email{esmaeelzadeh@bojnourdiau.ac.ir}

\subjclass[2010]{ 43A15, 43A85, 65T60}

\keywords{frames; Riesz bases; semi-irregular translates; sampling.}
 \maketitle
\begin{abstract}
In this note, we fix a real invertible $d\times d$ matrix $\mathcal{A}$ and consider $\mathcal{A}\mathbb{Z}^d$ as an index set. For $f\in L^2(\mathbb{R}^d)$, let $\Phi^{\mathcal{A}}_{f}:=\frac{1}{|\det \mathcal{A}|}\sum_{k\in \mathbb{Z}^d}|\hat{f}(\mathcal{A}^T)^{-1}(\cdot+k)|^2$ be the periodization of $|\hat{f}|^2$. By using $\Phi^{\mathcal{A}}_{f}$, among other things, we characterize when the sequence $\tau_{\mathcal{A}}(f):=\{f(\cdot-\mathcal{A}k)\}_{k\in \mathbb{Z}^d}$ is a Bessel sequence, frame of translates, Riesz basis, or orthonormal basis. And finally, we construct an example, in which $\tau_{\mathcal{A}}(f)$ is a Parseval frame of translates, but not a Riesz sequence.
\end{abstract}

\section{Introduction}
Frames are generalization of orthonormal bases in Hilbert spaces. In 1946, Dennis Gabor, filled this gap and formulated a  fundamental approach to signal decomposition in terms of elementary signals \cite{12}. In 1952,  Duffin and Schaffer, presented some problems  in non-harmonic Fourier series, and frames  for Hilbert spaces \cite{8}. Later, Daubechies, Grossman and Mayer revived the study of frames  and applications \cite{7}. The main property of frames which make them useful is their redundancy. Many properties of frames make them useful in various applications  in mathematics, sciences  and engineering.\\
For a nice and comprehensive survey on various types of frames one may refer to, and the refrences therein \cite{5,6,13}.\\
Frames  of translates are an important class of frames that have a special structure. These frames are central in approximation, sampling,Gabor and wevelet theory, and were investigated in the context of general properties of shift invariant spaces by a number of authors including \cite{9,14}.\\
Frames of translates  are natural example of frame sequences.\\
Frame sequences are useful in cases where we are interested only in expansions in subspaces. For the  literature regarding  frames and frame sequences, one may refer to \cite{3,4,6,13}.\\
In this paper, we consider some kinds of semi-regular frames  of translates on the Hilbert space $L^2(\mathbb{R}^d)$. More precisely,  we investigate the frames of the form $\{T_{\mathcal{A}k}f\}_{k\in \mathbb{Z}^d}$ where $\mathcal{A}$ is a real  invertible  $d\times d$ matrix and $f\in L^2(\mathbb{R}^d)$,  and it is a frame for the  closed subspace generated by $\{T_{\mathcal{A}k}f\}_{k\in \mathbb{Z}^d}$.\\
This is a special case  of irregular frames of translates of the form $\{\lambda_{k}f\}_{k\in \mathbb{Z}^d}$ where $\{\lambda_{k}\}_{k\in \mathbb{Z}^d}\subseteq \mathbb{R}^d$ and $f\in L^2(\mathbb{R}^d)$\cite{1,2,3}.
\section{PRELIMINARIES AND NOTATION}
Let $\mathcal{H}$ be a separable Hilbert space with inner product $\langle\cdot ,\cdot\rangle$.
A sequence $\{f_k\}_{k=1}^{\infty}$ is called a basis for $\mathcal{H}$,  if for every $f\in \mathcal{H}$ there is a  unique sequence of scalars $\{c_k\}_{k=1}^{\infty}$ such that $f=\sum_{k=1}^{\infty} c_kf_k $,
and a sequence $\{e_k\}_{k=1}^{\infty} \subseteq \mathcal{H}$ is an orthonormal system if $\langle {e_k},{e_j}\rangle={\delta}_{k,j}$. An orthonormal basis is an orthonormal system $\{e_k\}_{k=1}^{\infty}$ which is a basis for $\mathcal{H}$. \\
A Riesz basis for $\mathcal{H}$ is a family of the form $\{Ue_k\}_{k=1}^{\infty}$, where $\{e_k\}_{k=1}^{\infty}$ is an orthonormal basis for $\mathcal{H}$ and $U:\mathcal{H}\longrightarrow \mathcal{H}$ is a bounded bijective operator. A Riesz basis is actually a basis.
In fact, a basis $\{f_k\}_{k=1}^{\infty}$ in $\mathcal{H}$ is a Riesz basis, if $\sum_{k=1}^{\infty}c_kf_k$ converges in  $\mathcal{H}$ only when $\sum_{k=1}^{\infty}|c_k|^2<\infty$.\\
 A sequence $\{f_k\}_{k=1}^{\infty}$  of   elements  in  $\mathcal{H}$  is a frame for $\mathcal{H}$ if there exist constants ${A},{B} > 0$ such that,
\begin{equation}\label{eqkh2.1}
A\|f\|^2\leqslant\sum_{k=1}^{\infty}|\langle\ f,f_k\rangle|^2\leqslant B\|f\|^2, \forall f\in {\mathcal{H}}
\end{equation}
The numbers  $A,B$ are called frame bounds, they are not unique. A frame with bounds  $A=B=1$,  is called a Parseval frame. A sequence $\{f_k\}\subseteq \mathcal{H}$ is called a Bessel sequence if the right hand side of inequality of \eqref{eqkh2.1} holds.\\
Note that, an orthonormal system $\{e_k\}_{k=1}^{\infty}$ is a Bessel sequence.\\
A frame is excat,  if it ceases to be a frame whenever any single element is deleted from the sequence. A frame which is not exact is called  an overcomplete frame.\\
We recall that, every orthonormal basis is an exact Parseval frame and conversely,  every exact Parseval frame is an orthonormal basis.\\
A sequence $\{f_k\}_{k=1}^{\infty}$ is called a Riesz sequence if there exist $A,B>0$ such that
\[
A\sum_{k=1}^{\infty}|{c_k}|^2\leqslant \|\sum_{k=1}^{\infty}{c_k f_k}\|^2\leqslant B\sum_{k=1}^{\infty}|c_k|^2,
\]
for all finite sequence $\{c_k\}_{k=1}^{\infty}\subseteq \mathbb{C}$. A Riesz sequence $\{f_k\}_{k=1}^{\infty}$ in Hilbert space $\mathcal{H}$,  is a Riesz basis for the Hilbert space $\overline{span}\{f_k\}_{k=1}^{\infty}$, which might just be a subspace of $\mathcal{H}$.\\
Note that, $\{f_k\}_{k=1}^{\infty}$  is a frame sequence if it is a frame for $\overline{span}\{f_k\}_{k=1}^{\infty}$ \cite{6,10,11,13}.\\
Frame sequences and Riesz sequences are useful consepts in cases where we only obtain expansions in subspaces. \\
For $\mathcal{A}\in GL_d(\mathbb{R})$, let $\Lambda=\mathcal{A}\mathbb{Z}^d$ and $Q_{\mathcal{A}}=\mathcal{A}[0,1)^d$, then $\mathbb{R}^d=\bigcup_{\lambda \in \Lambda}(\lambda+Q_{\mathcal{A}})$ such that $(\lambda+Q_{\mathcal{A}})\bigcap (\lambda^{'}+Q_{\mathcal{A}})=\emptyset$ if and only if $\lambda\neq \lambda^{'}$.\\
A function $f :\mathbb{R}^d \longrightarrow \mathbb{C}$ is called $\Lambda$-periodic if $f(x+\lambda)=f(x)$ for all  $x \in \mathbb{R}^d$ and $\lambda \in \Lambda$.\\
Note that, $f$ is $\Lambda$-periodic if and only if $fo\mathcal{A}$ is periodic. We denote $fo\mathcal{A}$ by $f_{\mathcal{A}}$.\\
 Recall that for  $f \in L^1(\mathbf{R}^d)$, the  Fourier transform of $f$ is\\
$\mathcal{F}f(\xi) :=\hat{f}(\xi) =\int_{\mathbb{R}^d} f(x) e^{-2\pi i \xi \cdot x}dx$ ,   $\xi \in \mathbb{R}^d$\\
and the inversion Fourier transform is $\check{f} (x)=\hat{f}(-x)$.\\
As usual the definition of Fourier transform extends to a unitary operator $f \longmapsto \hat{f}$ on $L^2(\mathbb{R}^d)$ , known as Plancharel theorem.\\
Note that, $\hat{(f_{\mathcal{A}})}(\xi)=\frac{1}{|\det \mathcal{A}|}\hat{f}((\mathcal{A}^T)^{-1}\xi)$.( see \cite{11}),
and the Fourier series of $f_{\mathcal{A}}$ is
\begin{align*}
\sum_{k \in \mathbb{Z}^d}\hat{f_{\mathcal{A}}}(k)e^{-2 \pi i k\cdot x}&=\frac{1}{|\det \mathcal{A}|}\sum_{k\in \mathbb{Z}^d}\hat{f}((\mathcal{A}^T)^{-1}k)e^{-2 \pi i k\cdot x}\\
&=\frac{1}{|\det \mathcal{A}|}\sum_{\lambda \in \Lambda^{\perp}}\hat{f}(\lambda) e^{-2\pi i\lambda \cdot x},
\end{align*}
where $\Lambda^{\perp}=(\mathcal{A}^T)^{-1}\mathbb{Z}^d$.\\
For  $\psi \in L^2(Q_{\mathcal{A}})$, put
$
\varepsilon_{\mathcal{A}}(\psi)=\{(E_{\mathcal{A}k}\psi)(\gamma)\}_{k\in \mathbb{Z}^d}
$
where $(E_{\mathcal{A}k}\psi)(x)=e^{2\pi i \mathcal{A}k\cdot x}\psi(x)$
for $x\in Q_{\mathcal{A}}$ and
$
N_{\psi}^{\mathcal{A}}=\{ x\in Q_{\mathcal{A}} :\psi (x)=0\}
$.
For any function $f$ on $\mathbb{R}^d$  and $k\in \mathbb{Z}^d$, the translation of $f$ by $k$,  is $T_{\mathcal{A}k} f(x)= f(x-\mathcal{A}k)$ and the set of all translations of $f$ is denote by
$
\tau_{\mathcal{A}} (f) =\{T_{\mathcal{A}k}f\} _{k\in \mathbb{Z}^d}
$ which
 is called a system of translates of $f$.
And
$
\mathcal{F}(\tau_{\mathcal{A}}f):=\{(T_{\mathcal{A}k} f)^{\wedge}\}_{k\in \mathbb{Z}^d}=\{E_{-\mathcal{A}k} \hat{f}\}_{k\in \mathbb{Z}^d}
$
For  $f \in L^2(\mathbb{R}^d)$ and $\mathcal{A}\in GL_d(\mathbb{R})$.\\For $f\in L^2(\mathbb{R}^d)$, we define
\[
\Phi_{f}^{\mathcal{A}}(\gamma):=\frac{1}{|\det \mathcal{A}|}\sum_{k \in \mathbb{Z}^d}|\hat{f}\big((\mathcal{A}^T)^{-1}(\gamma+k)\big)|^2, \quad \gamma \in \mathbb{R}^d.
\]
We call $\Phi_{f}^{\mathcal{A}}$ the periodization of $|\hat{f}|^2$.
It is worthwhile to mention for  $f\in L^1(\mathbb{R}^d)$, the series $\sum_{k\in \mathbb{Z}^d} T_{\mathcal{A}k} f$ converges pointwise a.e., and in $L^1(Q_{\mathcal{A}})$ to a function $Pf$ such that $\|Pf\|_{L^1(Q_{\mathcal{A}})}\leq \| f\|_{L^1(\mathbb{R}^d)}$. Moreover for $\gamma\in \mathbb{Z}^d$, $(Pf)^{\widehat{\ \ }}(\gamma)$ (Fourier transform on $Q_{\mathcal{A}}$) equals $\hat{f}(\gamma)$ (Fourier transform on $\mathbb{R}^d$) see \cite{10}.  For  $f\in L^2(\mathbb{R}^d)$, one can check easily that,
\[
\|\Phi_{f}^{\mathcal{A}}\|_{L^{1}(Q_{\mathcal{A}})}=\frac{1}{|\det \mathcal{A}|}\|f\|^2_{L^2(\mathbb{R}^d)}.
\]
Consequently,
\[
(\Phi_{f}^{\mathcal{A}})^{\frac{1}{2}}(\gamma)=\frac{1}{\sqrt{|\det \mathcal{A}|}}\Big(\sum_{k \in \mathbb{Z}^d}|\hat{f}\big((\mathcal{A}^T)^{-1}(\gamma+k)\big)|^2\Big)^{\frac{1}{2}},
\]
belongs to $L^2(Q_{\mathcal{A}})$, and
$
\|(\Phi_{f}^{\mathcal{A}}) ^{\frac12} \| _{L^2(Q_{\mathcal{A}})}=\frac{1}{\sqrt{|\det \mathcal{A}|}}\|f\| _{L^2(\mathbb{R}^d)}.
$
Therefore $f\mapsto (\Phi _{f}^{\mathcal{A}})^{\frac{1}{2}}$ is a norm-preserving map of $L^2(\mathbb{R}^d)$ into $L^2(Q_{\mathcal{A}})$. In this paper,  among  other things, we use  $\Phi_{f}^{\mathcal{A}}$ for characterizing when $\tau_{\mathcal{A}}(f)$ is a Bessel sequence, frame of translates, Riesz basis or an  orthonormal basis.
\section{MAIN RESULTS}
Frames of translates are natural examples  of frame sequences. Frames  of translates play important roles in many areas, including  wavelet theory and reconstruction of signals  from sample values. \\
We recall that, for $f\in L^2(\mathbb{R}^d)$ if  $\tau_{\mathcal{A}}(f)$ is a Bessel sequence, then for any $\{c_k\}_{k \in \mathbb{Z}^d}\in l^2(\mathbb{Z}^d)$, $\sum_{k\in \mathbb{Z}^d} c_kT_{\mathcal{A}k}(f)$ converges in $L^2(\mathbb{R}^d)$ and $\sum_{k \in \mathbb{Z}^d} c_k E_{-\mathcal{A}k}$ converges in $L^2[0,1)^d$, where $E_{-\mathcal{A}k}(x)=e^{-2\pi i \mathcal{A}k\cdot x}$ , $x\in \mathbb{R}^d$.\\

Now for $f\in L^1(\mathbb{R}^d)$,let $\psi_{f}^{\mathcal{A}}(\gamma)=\sum_{k \in \mathbb{Z}^d} f(\gamma+\mathcal{A}k)$  the $\Lambda$-periodization of $f$,  we show that $\psi_{f}^{\mathcal{A}}$ is absolutely convergence in $L^1(Q_{\mathcal{A}})$ and $\|\psi_{f}^{\mathcal{A}}\|_{L^1(Q_{\mathcal{A}})}=\|f\|_{L^1(\mathbb{R}^d)}$.
\begin{lemma}
Let $f\in L^1(\mathbb{R}^d)$. With notations as above, the series defining $\psi_{f}^{\mathcal{A}}$ converges absolutely in $L^1(Q_{\mathcal{A}})$ and $\int_{Q_{\mathcal{A}}} \psi_{f}^{\mathcal{A}} (\gamma)d\gamma=\int_{\mathbb{R}^d} f(\gamma)d\gamma$.
\end{lemma}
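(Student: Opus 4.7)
The plan is to use the disjoint covering $\mathbb{R}^d = \bigsqcup_{k \in \mathbb{Z}^d}(Q_{\mathcal{A}} + \mathcal{A}k)$ (noted earlier in the preliminaries) together with Tonelli's theorem to reduce everything to an integral over $\mathbb{R}^d$.

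First I would prove absolute convergence in $L^1(Q_{\mathcal{A}})$. The natural move is to consider $\sum_{k \in \mathbb{Z}^d}\|T_{\mathcal{A}k}f\|_{L^1(Q_{\mathcal{A}})}$ and, via the translation-invariance of Lebesgue measure, rewrite $\int_{Q_{\mathcal{A}}}|f(\gamma+\mathcal{A}k)|\,d\gamma = \int_{Q_{\mathcal{A}}+\mathcal{A}k}|f(\eta)|\,d\eta$. Summing over $k$ and invoking the disjointness of the translates of $Q_{\mathcal{A}}$ collapses the sum to $\int_{\mathbb{R}^d}|f(\eta)|\,d\eta = \|f\|_{L^1(\mathbb{R}^d)} < \infty$. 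By Tonelli this in particular forces $\sum_k |f(\gamma+\mathcal{A}k)|$ to be finite for a.e.\ $\gamma \in Q_{\mathcal{A}}$, so $\psi_f^{\mathcal{A}}$ is well defined a.e.\ and the partial sums converge absolutely in $L^1(Q_{\mathcal{A}})$.

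Once absolute convergence is established, the equality of integrals is essentially bookkeeping. I would write
\[
\int_{Q_{\mathcal{A}}}\psi_f^{\mathcal{A}}(\gamma)\,d\gamma = \int_{Q_{\mathcal{A}}}\sum_{k\in\mathbb{Z}^d} f(\gamma + \mathcal{A}k)\,d\gamma,
\]
interchange sum and integral (legitimate by Fubini, since the absolute version was just shown to be finite), apply the translation $\eta = \gamma + \mathcal{A}k$ to each term, and then reassemble the disjoint union to obtain $\int_{\mathbb{R}^d} f(\eta)\,d\eta$.

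There is no real obstacle: the only substantive ingredients are translation invariance of Lebesgue measure, the tiling property of $Q_{\mathcal{A}}$ under $\Lambda = \mathcal{A}\mathbb{Z}^d$, and Tonelli's theorem. The one small point worth being careful about is to run the argument first with $|f|$ to justify a.e.\ pointwise convergence and the interchange of sum and integral, and only then repeat it for $f$ itself to conclude the integral identity.
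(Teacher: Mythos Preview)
Your proposal is correct and follows essentially the same route as the paper: both arguments use the tiling $\mathbb{R}^d=\bigsqcup_{k}(Q_{\mathcal{A}}+\mathcal{A}k)$ together with translation invariance to collapse $\sum_k\int_{Q_{\mathcal{A}}}|f(\gamma+\mathcal{A}k)|\,d\gamma$ to $\|f\|_{L^1(\mathbb{R}^d)}$, and then repeat the computation without absolute values. The only cosmetic difference is that the paper phrases the second step via the dominated convergence theorem on the partial sums $\psi_N=\sum_{|k|\le N}f(\cdot+\mathcal{A}k)$, whereas you invoke Fubini directly; these are interchangeable here.
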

\begin{proof}
Let $f\in L^1(\mathbb{R}^d)$, $\mathcal{A}\in GL_{d}(\mathbb{R})$ and put $\psi_{f}^{\mathcal{A}}(\gamma)=\sum_{k \in \mathbb{Z}^d}f(\gamma+\mathcal{A}k)$,  then using the fact $\psi_{f}^{\mathcal{A}}$ is $\Lambda$-periodic, we have
\[
\int_{Q_{\mathcal{A}}}|\psi(\gamma)|d\gamma\leq
\int_{Q_{\mathcal{A}}} \sum_{k\in \mathbb{Z}^d}|f(\gamma+\mathcal{A}k)|d\gamma
 =\sum_{k\in \mathbb{Z}^d}\int_{Q_{\mathcal{A}}}|f(\gamma+\mathcal{A}k)|d\gamma
\]
\[
 =\int_{\bigcup_{k\in \mathbb{Z}^d}Q_{\mathcal{A}}+\mathcal{A}k } |f(\gamma+\mathcal{A}k)|d\gamma=\int_{\mathbb{R}^d} |f(\gamma+\mathcal{A}k)|d\gamma = \|f\|_1<\infty,
 %\intertext{
% Now $\gamma$ replace to $\gamma-ak$, }
% &=
% \int_{\mathbb{R}^d} |f(\gamma)|d\gamma\leq \|f\|_1<\infty,
\]
 which is finite, because  $f\in L^1(\mathbb{R}^d)$. This implies that  $\psi_{f}^{\mathcal{A}}(\gamma)=\sum_{k \in \mathbb{Z}^d}f(\gamma+\mathcal{A}k)\in L^1(Q_{\mathcal{A}})$ and $\|\psi_{f}^{\mathcal{A}}\|_{L^1(Q_{\mathcal{A}})}\leqslant \|f\|_{1}$.
  Now by  using the  dominated convergence theorem, since
  \[
  \psi_N(\gamma)=\sum_{|k|\leq N} f(\gamma+\mathcal{A}k) \to \sum_{k\in \mathbb{Z}^d} f(\gamma+\mathcal{A}k) \quad \text{as}\  N \to \infty
  \]
where $|k|=\sum_{1}^n k_j$,
  and
  \[
  |\psi_N(\gamma)|\leq \sum_{k\in \mathbb{Z}^d} |f(\gamma+\mathcal{A}k)|\in L^1 (Q_{\mathcal{A}}).
  \]
  Then
  \[
  \int_{Q_{\mathcal{A}}} \psi_N(\gamma)d\gamma\to \int_{Q_{\mathcal{A}}} \sum_{k\in \mathbb{Z}^d} f(\gamma+\mathcal{A}k)d\gamma.
  \]
  And,
  \[
  \int_{Q_{\mathcal{A}}} \sum_{k\in \mathbb{Z}^d} f(\gamma+\mathcal{A}k) d\gamma =\sum_{k\in \mathbb{Z}^d} \int_{Q_{\mathcal{A}}} f(\gamma+\mathcal{A}k)d\gamma
  \]
 % Now $\gamma$ replace to $\gamma-k a$,
  \[
  =\int_{\bigcup_{k\in \mathbb{Z}^d}Q_{\mathcal{A}}+\mathcal{A}k }\psi (\gamma )d\gamma =\int_{\mathbb{R}^d}f(\gamma) d\gamma.
  \]
\end{proof}
Note that, $\varepsilon_{\mathcal{A}}(\psi)$ is a Bessel sequence in $L^2(Q_{\mathcal{A}})$ if and only if $\psi_{\mathcal{A}} \in L^{\infty}(Q_{\mathcal{A}})$. In this case $|\psi(\gamma)|^2\leqslant B$ , a.e., where $B$ is a Bessel bound.\\
Now we show that $\tau_{\mathcal{A}}(f)$ is a Bessel sequence if and only if $\Phi_{f}^{\mathcal{A}}$ is bounded above  in frequency space.
\begin{proposition}\label{prkh 3.3}
Let $f \in L^2(\mathbb{R}^d)$ and  $B>0$,  then
$\tau_{\mathcal{A}}(f)$ is a Bessel sequence with bound B if and only if,\\
$\Phi_{f}^{\mathcal{A}}(\gamma)\leqslant B$, a.e. $\gamma\in Q_{\mathcal{A}}$.
\end{proposition}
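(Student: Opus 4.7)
The plan is to transfer the problem to the Fourier side and interpret $\{\langle g,T_{\mathcal{A}k}f\rangle\}_{k\in\mathbb{Z}^d}$ as the sequence of Fourier coefficients of a suitable periodic function on a fundamental domain, then to apply Cauchy--Schwarz together with Parseval's identity.

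First, for any $g\in L^2(\mathbb{R}^d)$ I would apply Plancherel and the identity $(T_{\mathcal{A}k}f)^{\wedge}=E_{-\mathcal{A}k}\hat f$ to obtain
$$\langle g,T_{\mathcal{A}k}f\rangle=\int_{\mathbb{R}^d}\hat g(\xi)\overline{\hat f(\xi)}\,e^{2\pi i\mathcal{A}k\cdot\xi}\,d\xi.$$
The change of variable $\xi=(\mathcal{A}^T)^{-1}\eta$ converts $\mathcal{A}k\cdot\xi$ into $k\cdot\eta$ and produces the factor $|\det\mathcal{A}|^{-1}$; then splitting $\mathbb{R}^d=\bigsqcup_{n\in\mathbb{Z}^d}([0,1)^d+n)$ and using $e^{2\pi ik\cdot n}=1$ yields
$$\langle g,T_{\mathcal{A}k}f\rangle=\int_{[0,1)^d}F_g(\gamma)\,e^{2\pi ik\cdot\gamma}\,d\gamma,$$
where
$$F_g(\gamma):=\frac{1}{|\det\mathcal{A}|}\sum_{n\in\mathbb{Z}^d}\hat g\big((\mathcal{A}^T)^{-1}(\gamma+n)\big)\,\overline{\hat f\big((\mathcal{A}^T)^{-1}(\gamma+n)\big)}.$$
Thus $\{\langle g,T_{\mathcal{A}k}f\rangle\}_k$ is (up to sign in the exponent) the Fourier coefficient sequence of the $\mathbb{Z}^d$-periodic function $F_g$ on $[0,1)^d$.

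The pivotal pointwise estimate is Cauchy--Schwarz applied to the two $\ell^2(\mathbb{Z}^d)$ sequences $\{\hat g((\mathcal{A}^T)^{-1}(\gamma+n))\}_n$ and $\{\hat f((\mathcal{A}^T)^{-1}(\gamma+n))\}_n$, which gives $|F_g(\gamma)|^2\le\Phi_g^{\mathcal{A}}(\gamma)\Phi_f^{\mathcal{A}}(\gamma)$. Under the hypothesis $\Phi_f^{\mathcal{A}}\le B$ a.e., this forces $F_g\in L^2([0,1)^d)$, and Parseval combined with the identity $\int_{[0,1)^d}\Phi_g^{\mathcal{A}}(\gamma)\,d\gamma=\|g\|_{L^2}^2$ (obtained by the same tiling and change of variable used in the preliminaries) gives $\sum_k|\langle g,T_{\mathcal{A}k}f\rangle|^2\le B\|g\|_{L^2}^2$, proving sufficiency.

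For the converse I would test the Bessel inequality against functions $g$ of the form $\hat g(\eta)=s(\mathcal{A}^T\eta)\hat f(\eta)$, where $s:\mathbb{R}^d\to\mathbb{C}$ is bounded and $\mathbb{Z}^d$-periodic. A direct computation yields $F_g=s\cdot\Phi_f^{\mathcal{A}}$ and $\Phi_g^{\mathcal{A}}=|s|^2\Phi_f^{\mathcal{A}}$, so the Bessel inequality reduces to
$$\int_{[0,1)^d}|s(\gamma)|^2\Phi_f^{\mathcal{A}}(\gamma)\bigl(\Phi_f^{\mathcal{A}}(\gamma)-B\bigr)\,d\gamma\le 0.$$
If $\Phi_f^{\mathcal{A}}>B$ on a set of positive measure, I would pick $M>B$ so that $E_M:=\{\gamma\in[0,1)^d:B<\Phi_f^{\mathcal{A}}(\gamma)\le M\}$ has positive measure and take $s=\mathbf{1}_{E_M}$ (extended $\mathbb{Z}^d$-periodically); the integrand is then strictly positive on $E_M$, a contradiction. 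The truncation simultaneously ensures $g\in L^2(\mathbb{R}^d)$, since $\|g\|_{L^2}^2=\int_{E_M}\Phi_f^{\mathcal{A}}\le M|E_M|<\infty$. I expect the only real obstacle to be the careful bookkeeping in the first step---the change of variable together with the splitting into unit cubes and the interchange of the integral with the infinite sum---because everything afterward is Cauchy--Schwarz and Parseval.
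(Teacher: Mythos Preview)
Your argument is correct in both directions. The paper, however, proceeds differently: it works with the \emph{synthesis} operator rather than the analysis operator. For a finite scalar sequence $\{c_k\}$ it computes, via Plancherel and the same change of variable plus periodization that you use, the single identity
\[
\Big\|\sum_{k\in\mathbb{Z}^d}c_k\,T_{\mathcal{A}k}f\Big\|_{L^2(\mathbb{R}^d)}^2
=\int_{[0,1)^d}\Big|\sum_{k}c_k\,e^{-2\pi i(\mathcal{A}^T)^{-1}k\cdot\gamma}\Big|^2\Phi_f^{\mathcal{A}}(\gamma)\,d\gamma,
\]
and then invokes the standard equivalence ``Bessel $\Leftrightarrow$ synthesis operator bounded on $\ell^2$''. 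From this identity both implications are immediate: sufficiency by the pointwise bound $\Phi_f^{\mathcal{A}}\le B$, and necessity (which the paper does not spell out) by density of trigonometric polynomials in $L^2([0,1)^d)$. Your route is dual: you read $\langle g,T_{\mathcal{A}k}f\rangle$ as Fourier coefficients of the periodized cross--product $F_g$, obtain the upper bound via Cauchy--Schwarz, and for necessity test against carefully truncated $g$'s with $\hat g=s(\mathcal{A}^T\cdot)\hat f$. The paper's approach is a bit shorter because one identity simultaneously carries both directions and no Cauchy--Schwarz is needed; your approach has the advantage of working entirely from the definition of a Bessel sequence and of making the necessity step (including the verification that the test vector lies in $L^2$) completely explicit.
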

\begin{proof}
Given a finite sequence $\{c_k\}_{k\in \mathbb{Z}^d}\subseteq \mathbb{C}$ and put
\[
\psi(\gamma)=\sum_{k\in \mathbb{Z}^d} c_{k} e^{-2\pi i (\mathcal{A}^T)^{-1}k\cdot \gamma}.
\]
Note that $\psi\in L^2(Q_{(\mathcal{A}^T)^{-1}})$, and  we have
\begin{align*}
\|T\{c_k\}_{k\in \mathbb{Z}^d} \|^2 &=\|\sum_{k\in \mathbb{Z}^d} c_k T_{(\mathcal{A}^T)^{-1}k}f\|^2\cr
&=\Big\|\mathcal{F}( \sum_{k\in \mathbb{Z}^d} c_kT_{(\mathcal{A}^T)^{-1}k}f)\Big\|^2\cr
&=\big\|\sum_{k\in \mathbb{Z}^d}c_kE_{-(\mathcal{A}^T)^{-1}k}\hat{f}\big\|^2\cr
&=\int_{\mathbb{R}^d} \Big|\sum_{k\in \mathbb{Z}^d} c_{k}e^{-2\pi i(\mathcal{A}^T)^{-1}k\cdot \gamma} \hat{f}(\gamma)\Big|^2 d\gamma\cr
&=\int_{\hat{\mathbb{R}^d}}| \psi (\gamma)|^2 |\hat{f}(\gamma)|^2d\gamma\cr
&=\frac{1}{|\det \mathcal{A}|}\int_{[0,1]^d}|\psi(\gamma)|^2\sum_{k \in \mathbb{Z}^d } |\hat{f}(\mathcal{A}^T)^{-1}(\gamma+k)|^2 d\gamma\cr
&=\int_{[0,1]^d}|\psi(\gamma)|^2\Phi_{f}^{\mathcal{A}}(\gamma)d\gamma.
\end{align*}
If $\Phi_{f}^{\mathcal{A}}(\gamma)\leqslant B$ for a.e. $\gamma\in \mathbb{R}^d$, it follows that,\\
$\|T\{c_{k}\}_{k \in \mathbb{Z}^d}\|^2\leqslant B\int_{[0,1]^d}|\psi(\gamma)|^2d\gamma
=B \sum_{k \in \mathbb{Z}^d}|c_{k}|^2$.
\end{proof}
Now we show that, $\tau_{\mathcal{A}}(f)$ is an  orthonormal sequence if and only if  $\Phi_{f}^{\mathcal{A}}(\gamma)=1$, a.e..
\begin{proposition}
Let $f  \in L^2(\mathbb{R}^d)$, then $\tau_{\mathcal{A}}(f)$ is an orthonormal sequence if and only if
$
\Phi_{f}^{\mathcal{A}} (\gamma) =1$,
a.e., $\gamma \in Q_{\mathcal{A}}$.
\end{proposition}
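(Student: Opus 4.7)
The plan is to express the inner products $\langle T_{\mathcal{A}k}f, T_{\mathcal{A}j}f\rangle$ as Fourier coefficients of $\Phi_f^{\mathcal{A}}$ on the torus $[0,1)^d$, and then invoke uniqueness of Fourier series. First, I would reduce orthonormality of $\tau_{\mathcal{A}}(f)$ to the single family of identities $\langle f, T_{\mathcal{A}m}f\rangle = \delta_{m,0}$ for all $m \in \mathbb{Z}^d$, using the obvious translation invariance $\langle T_{\mathcal{A}k}f, T_{\mathcal{A}j}f\rangle = \langle f, T_{\mathcal{A}(j-k)}f\rangle$.

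Next, I would apply Plancherel together with the fact (recorded earlier in the paper) that $\widehat{T_{\mathcal{A}m}f}(\xi) = e^{-2\pi i \mathcal{A}m\cdot \xi}\hat{f}(\xi)$, giving
\[
\langle f, T_{\mathcal{A}m}f\rangle = \int_{\mathbb{R}^d} |\hat{f}(\xi)|^2\, e^{2\pi i \mathcal{A}m\cdot \xi}\, d\xi .
\]
Since $|\hat f|^2 \in L^1(\mathbb{R}^d)$, I can split this integral over the tiling $\mathbb{R}^d = \bigcup_{k \in \mathbb{Z}^d} (\mathcal{A}^T)^{-1}([0,1)^d + k)$ and perform the change of variables $\xi = (\mathcal{A}^T)^{-1}(\gamma + k)$, whose Jacobian is $|\det\mathcal{A}|^{-1}$. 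The crucial algebraic simplification is that
\[
\mathcal{A}m \cdot (\mathcal{A}^T)^{-1}(\gamma + k) = m\cdot(\gamma + k) \equiv m\cdot \gamma \pmod{\mathbb{Z}},
\]
so the exponential factor becomes $e^{2\pi i m\cdot \gamma}$, independent of $k$. Pulling the sum inside (justified by the dominated convergence / Tonelli argument in essentially the form used in the preceding Lemma) collapses the summation into $\Phi_f^{\mathcal{A}}(\gamma)$, yielding
\[
\langle f, T_{\mathcal{A}m}f\rangle = \int_{[0,1)^d} \Phi_f^{\mathcal{A}}(\gamma)\, e^{2\pi i m\cdot \gamma}\, d\gamma .
\]

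Thus $\langle f, T_{\mathcal{A}m}f\rangle$ is precisely the $(-m)$-th Fourier coefficient of the $\mathbb{Z}^d$-periodic $L^1$ function $\Phi_f^{\mathcal{A}}$ on $[0,1)^d$. The orthonormality condition $\langle f, T_{\mathcal{A}m}f\rangle = \delta_{m,0}$ therefore says that all Fourier coefficients of $\Phi_f^{\mathcal{A}} - 1$ vanish, which by the completeness of $\{e^{2\pi i m\cdot \gamma}\}_{m\in\mathbb{Z}^d}$ in $L^2([0,1)^d)$ is equivalent to $\Phi_f^{\mathcal{A}} \equiv 1$ a.e.; by $\mathbb{Z}^d$-periodicity this is the same as the stated condition on $Q_{\mathcal{A}}$. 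The main technical obstacle is simply justifying the interchange of sum and integral to identify $\Phi_f^{\mathcal{A}}$; this is immediate from $|\hat f|^2 \in L^1$ and the periodization identity $\|\Phi_f^{\mathcal{A}}\|_{L^1(Q_{\mathcal{A}})} = |\det\mathcal{A}|^{-1}\|f\|_2^2$ already recorded in the preliminaries.
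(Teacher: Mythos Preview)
Your proposal is correct and follows essentially the same approach as the paper's proof: reduce via translation invariance to $\langle f,T_{\mathcal{A}m}f\rangle$, apply Plancherel, periodize $|\hat f|^2$ to recognize these inner products as the Fourier coefficients of $\Phi_f^{\mathcal{A}}$, and conclude by completeness of the exponentials. The only cosmetic difference is that you carry out the periodization over $[0,1)^d$ with the standard basis $\{e^{2\pi i m\cdot\gamma}\}_{m\in\mathbb{Z}^d}$ (natural, since $\Phi_f^{\mathcal{A}}$ is $\mathbb{Z}^d$-periodic), whereas the paper writes the final integral over $Q_{\mathcal{A}}$ and invokes Pontryagin duality for the concluding uniqueness step.
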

\begin{proof}
We have to show that for $k_1,k_2\in \Lambda$, $\langle T_{\mathcal{A}k_1}f, T_{\mathcal{A}k_2}f\rangle=\delta_{\mathcal{A}k_1,\mathcal{A}k_2}$,  if and only if $\Phi_{f}^{ \mathcal{A}}=1$ a.e., $\gamma\in \mathbb{R}^d$\\
Using the Plancharel theorem and Weil's formula, we have
\begin{align}
\langle T_{\mathcal{A}k_1}f, T_{\mathcal{A}k_2}f\rangle&=\langle f, T_{\mathcal{A}(k_1-k_2)}f\rangle\cr
&=\langle \hat{f}, \widehat{T_{\mathcal{A}(k_1-k_2)}f}\rangle\cr
&=\int_{\mathbb{R}^d}|\hat{f}(\gamma)|^2 \cdot e^{-2\pi i\mathcal{A}(k_1-k_2)\cdot \gamma}d\gamma\cr
&=\frac{1}{|\det \mathcal{A}|}\int_{Q_{\mathcal{A}}}\sum_{\lambda\in \mathbb{Z}^d} \big|\hat{f}\big((\mathcal{A}^T)^{-1}(\gamma+\lambda)\big)|^2 e^{-2\pi i (\mathcal{A}^T)^{-1}k\cdot \gamma}d\gamma\cr
&=\int_{Q_{\mathcal{A}}}\Phi_{f}^{\mathcal{A}}(\gamma)\cdot e^{-2\pi i(\mathcal{A}^T)^{-1} k \cdot \gamma}d\gamma. \label{eqkh3.1}
\end{align}
Pontryagin  duality theorem and \cite[Proposition $4\!\cdot\!3$]{10}, imply that $\Lambda$ is an orthonormal basis for $L^2(Q_{\mathcal{A}})$. Now \eqref{eqkh3.1} completes the proof.
\end{proof}
We continue to determine conditions, such that $\varepsilon_{\mathcal{A}}(\psi)$ (Theorem \ref{thkh3..3.5}) and $\tau_{\mathcal{A}}(f)$ (Theorem \ref{thkh3.7}) are frames. But first we need to prove a lemma, that shows
\[
\overline{span}\varepsilon_{\mathcal{A}}(\psi)=\{f\in L^2(Q_{\mathcal{A}}):f=0\ \text{a.e.  on} \ N_{\psi}^ {\mathcal{A}}\}.
\]
Then by using this, we specify conditions for $\varepsilon_{\mathcal{A}}(\psi)$ such that, it is a frame for $L^2(Q_{\mathcal{A}})$ and $\{f\in L^2(Q_{\mathcal{A}}):f=0\ \text{a.e.} \ \text{on} \ N_{\psi}^ {\mathcal{A}}\}$. Finally, in Theorem \ref{thkh3.7}., we determine such conditions that the set of translations of $f$, that is $\tau_{\mathcal{A}}(f)$,  is a frame.
\begin{lemma}\label{lekh3.4}
Let $\psi \in L^2(Q_{\mathcal{A}})$ be bounded, then $\overline{span}\varepsilon_{\mathcal{A}}(\psi)=\{f\in L^2(Q_{\mathcal{A}}):f=0\ \text{a.e.  on} \ N_{\psi}^ {\mathcal{A}}\}$.
\end{lemma}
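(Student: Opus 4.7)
The plan is to prove the two inclusions separately. Let $V:=\{f\in L^2(Q_{\mathcal{A}}):f=0\ \text{a.e.\ on}\ N_{\psi}^{\mathcal{A}}\}$. The inclusion $\overline{\operatorname{span}}\,\varepsilon_{\mathcal{A}}(\psi)\subseteq V$ is the easy direction: each $E_{\mathcal{A}k}\psi$ equals $e^{2\pi i\mathcal{A}k\cdot x}\psi(x)$, which is a pointwise multiple of $\psi$, hence vanishes wherever $\psi$ does, so every $E_{\mathcal{A}k}\psi$ sits in $V$. Since $V$ is manifestly a closed subspace of $L^2(Q_{\mathcal{A}})$ (it is the kernel of the restriction map $f\mapsto f\,\chi_{N_{\psi}^{\mathcal{A}}}$, a bounded linear functional-like operation), the closed linear span of $\varepsilon_{\mathcal{A}}(\psi)$ is contained in $V$.

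For the nontrivial inclusion $V\subseteq \overline{\operatorname{span}}\,\varepsilon_{\mathcal{A}}(\psi)$, I would argue in the standard Hilbert-space way: it suffices to show that any $h\in V$ which is orthogonal to every element of $\varepsilon_{\mathcal{A}}(\psi)$ must be the zero vector. Assume therefore $h\in V$ satisfies
\[
\langle h, E_{\mathcal{A}k}\psi\rangle=\int_{Q_{\mathcal{A}}} h(x)\,\overline{\psi(x)}\,e^{-2\pi i\mathcal{A}k\cdot x}\,dx=0\qquad\text{for all }k\in\mathbb{Z}^d.
\]
Since $\psi$ is bounded and $h\in L^2(Q_{\mathcal{A}})$, the product $h\overline{\psi}$ lies in $L^2(Q_{\mathcal{A}})$. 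The displayed equation then says that every Fourier coefficient of $h\overline{\psi}$ (with respect to the exponential system indexed by $\mathcal{A}\mathbb{Z}^d$) vanishes.

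The main step is to invoke the completeness of this exponential system in $L^2(Q_{\mathcal{A}})$ — this is the very fact used in the preceding proposition (``Pontryagin duality theorem and \cite[Proposition~$4\!\cdot\!3$]{10}''). From this completeness one concludes $h\overline{\psi}=0$ a.e.\ on $Q_{\mathcal{A}}$, so $h$ vanishes a.e.\ on the complement $Q_{\mathcal{A}}\setminus N_{\psi}^{\mathcal{A}}$. Combined with the hypothesis $h\in V$, which forces $h=0$ a.e.\ on $N_{\psi}^{\mathcal{A}}$, we deduce $h=0$ in $L^2(Q_{\mathcal{A}})$. This shows the orthogonal complement of $\overline{\operatorname{span}}\,\varepsilon_{\mathcal{A}}(\psi)$ within $V$ is trivial, hence equality holds.

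The only genuine obstacle is the appeal to the completeness of the exponential system on $Q_{\mathcal{A}}$; boundedness of $\psi$ is used precisely to make sure $h\overline{\psi}\in L^2(Q_{\mathcal{A}})$ so the standard $L^2$-Fourier argument applies without having to pass through $L^1$-uniqueness. Apart from that, every step is routine.
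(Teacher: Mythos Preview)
Your argument is correct and follows essentially the same route as the paper's own proof: one containment is immediate since each $E_{\mathcal{A}k}\psi$ vanishes on $N_{\psi}^{\mathcal{A}}$ and the target space is closed, while the other is obtained by taking $h\in V$ orthogonal to all $E_{\mathcal{A}k}\psi$, using boundedness of $\psi$ to place $h\overline{\psi}$ in $L^2(Q_{\mathcal{A}})$, and then invoking completeness of the exponential system (the paper cites \cite[Proposition~4.3]{10}) to conclude $h\overline{\psi}=0$ a.e., whence $h=0$.
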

\begin{proof}
It is easy to check that  $\{f\in L^2(Q_{\mathcal{A}}):f=0\ \text{a.e.} \ \text{on} \ N_{\psi}^{ \mathcal{A}}\}$ is a closed subspace, and it is clear that
\[
span\varepsilon_{\mathcal{A}}(\psi)\subseteq \{f\in L^2(Q_{\mathcal{A}}):f=0\ \text{a.e.  on} \ N_{\psi}^{\mathcal{A}}\}.
\]. To show that
$\overline{span}\varepsilon_{\mathcal{A}}(\psi)=\{f\in L^2(Q_{\mathcal{A}}):f=0\ \text{a.e.  on} \ N_{\psi}^{ \mathcal{A}}\}$, \\
suppose that $f\in \{f\in L^2(Q_{\mathcal{A}}):f=0\ \text{a.e.  on} \ N_{\psi}^{ \mathcal{A}}\}$ satisfies  $\langle f, \psi  e_k \rangle =0$ for every $k\in \mathbb{Z}^d$. Since $\psi $ is bounded, $f\overline{\psi}\in L^2(Q_{\mathcal{A}})$ and $Q_{(\mathcal{A}^T)^{-1}}$ is an orthonormal basis for $L^2(Q_{\mathcal{A}})$, (See \cite{10})  we have  $\langle f \overline{\psi}, e_k\rangle =\langle f, \psi  e_k\rangle=0$
for $k\in \mathbb{Z}^d$, therefore  $f\overline{\psi}=0$ a.e., and since $f\in \{f\in L^2(Q_{\mathcal{A}}):f=0\ \text{a.e.  on} \ N_{\psi}^{ \mathcal{A}}\}$ we have  $f=0$ a.e.. Hence $\varepsilon_{\mathcal{A}}(\psi)$ is complete in the  set $\{f\in L^2(Q_{\mathcal{A}}):f=0\ \text{a.e.  on} \ N_{\psi}^{ \mathcal{A}}\}$, and we are done.
\end{proof}
\begin{theorem}\label{thkh3..3.5}
Given $\psi \in L^2(Q_{\mathcal{A}})$, then $\varepsilon_{\mathcal{A}}(\psi)$ is a frame sequence in $L^2(Q_{\mathcal{A}})$ if and only if there exist $A, B>0$ such that $A \leq |\psi (\gamma)|^2 \leq B$ for a.e. $\gamma\not\in N_{\psi}^{\mathcal{A}}$. In this case the closed span of $\varepsilon_{\mathcal{A}}(\psi)$ is
\[
H_{\psi}^{\mathcal{A}} =\{f\in L^2(Q_{\mathcal{A}}):\ f=0 \ \text{a.e.\ on}\ N_{\psi}^{\mathcal{A}}\},
\]
and $A, B$ are frame bounds for $\varepsilon_{\mathcal{A}}(\psi)$ as a frame for $H_{\psi}^{\mathcal{A}}$.
\end{theorem}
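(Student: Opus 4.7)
The plan is to reduce both frame inequalities to a single Parseval-type identity on $Q_{\mathcal{A}}$. For any finitely supported $\{c_{k}\}\subseteq\mathbb{C}$, a direct computation gives
\[
\Big\|\sum_{k} c_{k} E_{\mathcal{A}k}\psi\Big\|_{L^{2}(Q_{\mathcal{A}})}^{2} \;=\; \int_{Q_{\mathcal{A}}} |\psi(\gamma)|^{2}\Big|\sum_{k} c_{k}\,e^{2\pi i\mathcal{A}k\cdot\gamma}\Big|^{2}\,d\gamma,
\]
and dually, for any $f\in L^{2}(Q_{\mathcal{A}})$,
\[
\langle f, E_{\mathcal{A}k}\psi\rangle \;=\; \int_{Q_{\mathcal{A}}} f(\gamma)\overline{\psi(\gamma)}\,e^{-2\pi i\mathcal{A}k\cdot\gamma}\,d\gamma,
\]
so these are (up to the appropriate $|\det\mathcal{A}|$-normalization) the Fourier coefficients of $f\overline{\psi}\in L^{2}(Q_{\mathcal{A}})$ with respect to the same exponential orthonormal basis invoked in Lemma \ref{lekh3.4}. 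Parseval's identity on that basis will then yield the key identity
\[
\sum_{k\in\mathbb{Z}^{d}}|\langle f, E_{\mathcal{A}k}\psi\rangle|^{2} \;=\; \int_{Q_{\mathcal{A}}}|f(\gamma)|^{2}|\psi(\gamma)|^{2}\,d\gamma.
\]

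For the \emph{if} direction, assume $A\leq|\psi(\gamma)|^{2}\leq B$ a.e.\ on $Q_{\mathcal{A}}\setminus N_{\psi}^{\mathcal{A}}$. The upper bound makes $\psi$ bounded, so Lemma \ref{lekh3.4} applies and gives $\overline{\operatorname{span}}\,\varepsilon_{\mathcal{A}}(\psi)=H_{\psi}^{\mathcal{A}}$. For any $f\in H_{\psi}^{\mathcal{A}}$ the integrand in the key identity vanishes on $N_{\psi}^{\mathcal{A}}$, so the two-sided hypothesis on $|\psi|^{2}$ immediately yields $A\|f\|^{2}\leq\sum_{k}|\langle f, E_{\mathcal{A}k}\psi\rangle|^{2}\leq B\|f\|^{2}$, which is exactly the frame inequality on $H_{\psi}^{\mathcal{A}}$ with bounds $A,B$.

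For the \emph{only if} direction, suppose $\varepsilon_{\mathcal{A}}(\psi)$ is a frame sequence with bounds $A,B$. The Bessel half of the frame inequality, together with the criterion already recorded in the preliminaries (that Bessel forces $|\psi|^{2}\leq B$ a.e.), supplies the upper bound and shows $\psi\in L^{\infty}(Q_{\mathcal{A}})$; Lemma \ref{lekh3.4} then identifies the closed span as $H_{\psi}^{\mathcal{A}}$. For the lower bound I would argue by contradiction: if the set $E=\{\gamma\in Q_{\mathcal{A}}\setminus N_{\psi}^{\mathcal{A}}:|\psi(\gamma)|^{2}<A-\varepsilon\}$ had positive measure for some $\varepsilon>0$, then $f=\chi_{E}\in H_{\psi}^{\mathcal{A}}$ would satisfy $\sum_{k}|\langle f, E_{\mathcal{A}k}\psi\rangle|^{2}=\int_{E}|\psi|^{2}\leq(A-\varepsilon)|E|=(A-\varepsilon)\|f\|^{2}$, contradicting the lower frame bound.

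The main obstacle I anticipate is bookkeeping around the Parseval identity on $L^{2}(Q_{\mathcal{A}})$: one must track the $|\det\mathcal{A}|$ normalization carefully and confirm that the exponential system appearing in $\varepsilon_{\mathcal{A}}(\psi)$ really is (up to rescaling) the orthonormal basis tacitly used in Lemma \ref{lekh3.4}. Once the identity $\sum_{k}|\langle f, E_{\mathcal{A}k}\psi\rangle|^{2}=\int_{Q_{\mathcal{A}}}|f|^{2}|\psi|^{2}$ is pinned down, both halves of the theorem reduce to substituting the right test functions.
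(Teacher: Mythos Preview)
Your proposal is correct and follows essentially the same route as the paper: both arguments hinge on the Parseval-type identity $\sum_{k}|\langle f,\psi e_{k}\rangle|^{2}=\int_{Q_{\mathcal{A}}}|f|^{2}|\psi|^{2}$ (obtained via $\langle f,\psi e_{k}\rangle=\langle f\overline{\psi},e_{k}\rangle$ and the orthonormal basis of exponentials), then test against $f=\chi_{D}$ for a set where $|\psi|^{2}$ would violate the lower bound. If anything, your write-up is more complete than the paper's, which treats only the ``only if'' direction explicitly; your ``if'' direction via Lemma~\ref{lekh3.4} and the same identity is the natural complement.
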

\begin{proof}
Let $A, B$ be frame bounds for $\overline{span}\varepsilon _{\mathcal{A}}(\psi)$. Since  $\varepsilon_{\mathcal{A}}(\psi)$ is  a Bessel sequence, we have $|\psi|^2\leq B$ a.e. Fix any $f\in H_{\psi}^{ \mathcal{A}}$, then  $f\overline{\psi}\in L^2(Q_{\mathcal{A}})$. Since $\psi$ is bounded, so $\varepsilon_{\mathcal{A}}(\psi)$ is a frame for $H_{\psi}^{ \mathcal{A}}$. Now by using the fact that $Q_{(\mathcal{A}^T)^{-1}}$ is an orthonormal basis for $L^2(Q_{\mathcal{A}})$, we have
\begin{align*}
A\int_{Q_{\mathcal{A}}}|f(\gamma)|^2 d\gamma&= A\|f\|^2 _{L^2} \\
&\leq \sum_{k\in \mathbb{Z}^d}|\langle f, \psi e_k\rangle|^2\\
&=\sum_{k\in \mathbb{Z}^d} |\langle f\overline{\psi}, e_k\rangle|^2 \\
&= \|f\overline{\psi}\|^2_{L^2}\\
&=\int_{Q_{\mathcal{A}}} |f(\gamma)|^2 |\psi (\gamma)|^2 d\gamma.
\end{align*}
Since $f$ and $\psi$ both vanish on $N_{\psi}^{\mathcal{A}}$, we have,
\begin{equation}\label{eqkh3.5}
\int_{Q_{\mathcal{A}}\setminus N_{\psi}^{ \mathcal{A}}}|f(\gamma)|^2 (|\psi(\gamma)|^2 -A)d\gamma\geq 0 .
\end{equation}
If $|\psi(\gamma)|^2<A$ on any set $D\subseteq Q_{\mathcal{A}}\setminus N_{\psi}^{ \mathcal{A}}$ of positive measure, then taking $f=\chi_D$ in a inequality \eqref{eqkh3.5},  leads to a contradiction, hence we must have $|\psi (\gamma)|^2 \geq A$ for a.e., $\gamma\not\in N_{\psi}^{ \mathcal{A}}$.
\end{proof}
The next Theorem  shows  the condition under  which,  $\tau_{\mathcal{A}}(f)$ is a frame sequence.
\begin{theorem}\label{thkh3.7}
Let $f\in L^2(\mathbb{R}^d)$ and $A,B>0$.  Then $\tau_{\mathcal{A}}(f)$ is a frame sequence with bounds $A,B$ if and only if, $A\leqslant \Phi_{f}^{\mathcal{A}}(\gamma)\leqslant B$, a.e.  $\gamma \notin N_{\Phi_{f}^{\mathcal{A}}}$, where $N_{\Phi_{f}^{\mathcal{A}}}=\{f\in Q_{\mathcal{A}}:\Phi_{f}^{\mathcal{A}}(\gamma)=0\}$.
\end{theorem}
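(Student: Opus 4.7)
The plan is to reduce Theorem \ref{thkh3.7} to Theorem \ref{thkh3..3.5} by passing through the function $\psi := (\Phi_f^{\mathcal{A}})^{1/2} \in L^2(Q_{\mathcal{A}})$. The bridge is the identity underlying the proof of Proposition \ref{prkh 3.3}: for any finitely supported sequence $\{c_k\}_{k \in \mathbb{Z}^d}$,
\[
\Big\|\sum_{k\in \mathbb{Z}^d} c_k\, T_{\mathcal{A}k} f\Big\|^2_{L^2(\mathbb{R}^d)} = \int_{Q_{\mathcal{A}}} \Big|\sum_{k}c_k\, e^{-2\pi i \mathcal{A}k\cdot \gamma}\Big|^2 \Phi_f^{\mathcal{A}}(\gamma)\, d\gamma = \Big\|\sum_{k} c_k\, E_{-\mathcal{A}k}\psi\Big\|^2_{L^2(Q_{\mathcal{A}})}.
\]
This says precisely that the synthesis operator $T_f:\ell^2(\mathbb{Z}^d)\to L^2(\mathbb{R}^d)$ of $\tau_{\mathcal{A}}(f)$ and the synthesis operator $T_\psi:\ell^2(\mathbb{Z}^d)\to L^2(Q_{\mathcal{A}})$ of $\varepsilon_{\mathcal{A}}(\psi)$ have identical action norms (up to the irrelevant reindexing $k\mapsto -k$, which is a unitary on $\ell^2$). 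By density the identity extends from finitely supported $c$ to every $c\in\ell^2(\mathbb{Z}^d)$.

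Next I would recall that frame-sequence bounds are determined entirely by the action of the synthesis operator: the optimal upper bound is the square of the operator norm of $T$, while the optimal lower bound is the infimum of $\|Tc\|^2/\|c\|^2$ as $c$ ranges over the orthogonal complement of $\ker T$. The equality $\|T_f c\| = \|T_\psi c\|$ therefore forces $\tau_{\mathcal{A}}(f)$ to be a frame sequence with bounds $A,B$ if and only if $\varepsilon_{\mathcal{A}}(\psi)$ is.

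Finally, I would invoke Theorem \ref{thkh3..3.5} for $\psi = (\Phi_f^{\mathcal{A}})^{1/2}$: it yields that $\varepsilon_{\mathcal{A}}(\psi)$ is a frame sequence with bounds $A,B$ iff $A \leq |\psi(\gamma)|^2 \leq B$ a.e.\ $\gamma \notin N_\psi^{\mathcal{A}}$. Since $|\psi|^2 = \Phi_f^{\mathcal{A}}$ and the zero set of $\psi$ coincides with $N_{\Phi_f^{\mathcal{A}}}$, this is exactly the condition asserted in the theorem.

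The main obstacle is the middle step, namely justifying that equality of action norms of the two synthesis operators transfers the pair of frame-sequence bounds verbatim. A reader who wants to avoid the abstract synthesis-operator viewpoint can replace this step by a direct argument: the upper bound is already furnished by Proposition \ref{prkh 3.3}, and for the lower bound one expresses $g \in \overline{\mathrm{span}}\,\tau_{\mathcal{A}}(f)$ as $T_f c$ with $c \in (\ker T_f)^\perp$, plugs $c$ into the displayed identity, and uses $A \leq \Phi_f^{\mathcal{A}}$ off $N_{\Phi_f^{\mathcal{A}}}$ together with the fact that the integrand vanishes on $N_{\Phi_f^{\mathcal{A}}}$ itself.
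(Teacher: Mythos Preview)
Your proposal is correct and follows essentially the same strategy as the paper: reduce the frame-sequence property of $\tau_{\mathcal{A}}(f)$ to that of an exponential-modulated system $\varepsilon_{\mathcal{A}}(\psi)$ and then invoke Theorem~\ref{thkh3..3.5}. The paper's proof is a terse sketch of exactly this reduction (phrasing it via $\{E_{-\mathcal{A}k}|\hat f|\}$ rather than your cleaner choice $\psi=(\Phi_f^{\mathcal{A}})^{1/2}$), so your version simply fills in the details---in particular the synthesis-operator identity and the transfer of bounds---that the paper leaves implicit.
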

\begin{proof}
Suppose that $f\in L^2(\mathbb{R}^d)$ and define $V(f):=\overline{span}\{\tau_{\mathcal{A}}(f)\}$. We have to show that, $\tau_{\mathcal{A}}(f)$ is a frame for $V(f)$ if and only if,  for $\gamma\in \mathbb{R}^d$, $\{(E_{-\mathcal{A}k}|\hat{f}|)(\gamma)\}_{k\in \mathbb{Z}^d}$ is  a frame for set $\{F\in L^2(Q_{\mathcal{A}}):F=0$ a.e. on $N_{\Phi_{f}^{\mathcal{A}}}\}$. This happens if and only if $\{(E_{-\mathcal{A}k}|\hat{f}|)(\gamma)\}_{k\in \mathbb{Z}^d}$ is a frame sequence in $L^2(Q_{\mathcal{A}})$.  Now  the result follows  from Theorem \ref{thkh3..3.5}..
\end{proof}
\begin{remark}
With the assumption in Theorem \ref{thkh3.7}., we call the frame generated by $\tau_{\mathcal{A}}(f)$ the frame determined by $\Phi_{f}^{\mathcal{A}}$.
\end{remark}
The following corollary charactrize the members of the subspace generated by a frame sequence of translates.
\begin{corollary}
Assume that $f\in L^2(\mathbb{R}^d)$ and $\tau_{\mathcal{A}}(f)$ is a frame sequence.  Then a function $\psi\in L^2(\mathbb{R}^d)$ belongs to $\overline{span}\{\tau_{\mathcal{A}}(f)\}$ if and only if exists a $\Lambda$-periodic function $F$ whose restriction to $Q_{\mathcal{A}}$, belongs to $L^2(Q_{\mathcal{A}})$, such that $\hat{\psi}=F\hat{f}$.
\end{corollary}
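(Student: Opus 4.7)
The plan is to transfer everything to the Fourier side and match elements of the closed span with their $\ell^2$ coefficient sequences. Since $\tau_{\mathcal{A}}(f)$ is a frame sequence, it is a frame (in particular a Bessel sequence) for $V(f):=\overline{span}\{\tau_{\mathcal{A}}(f)\}$. The frame expansion shows every $\psi\in V(f)$ admits a representation $\psi=\sum_{k\in\mathbb{Z}^d}c_kT_{\mathcal{A}k}f$ with $\{c_k\}\in\ell^2(\mathbb{Z}^d)$ (for instance via the canonical dual frame), while the Bessel bound of Proposition \ref{prkh 3.3} ensures conversely that every such $\ell^2$-series converges in $L^2(\mathbb{R}^d)$ to an element of $V(f)$. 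Hence
\[
V(f)=\Big\{\sum_{k\in\mathbb{Z}^d}c_kT_{\mathcal{A}k}f:\{c_k\}\in\ell^2(\mathbb{Z}^d)\Big\}.
\]

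For the forward implication, I start with $\psi\in V(f)$ and its $\ell^2$ coefficients $\{c_k\}$, and apply $\mathcal{F}$ termwise, using continuity of the Fourier transform on $L^2$ together with $\widehat{T_{\mathcal{A}k}f}=E_{-\mathcal{A}k}\hat{f}$. This yields $\hat{\psi}=F\hat{f}$ with $F(\gamma)=\sum_{k}c_k e^{-2\pi i\mathcal{A}k\cdot\gamma}$. The function $F$ is periodic with respect to the lattice associated to $\mathcal{A}$ (in the paper's notation, $\Lambda$-periodic), and Parseval on $Q_{\mathcal{A}}$ shows that $F|_{Q_{\mathcal{A}}}\in L^2(Q_{\mathcal{A}})$, with norm proportional to $\|\{c_k\}\|_{\ell^2}$.

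For the converse, suppose $\hat{\psi}=F\hat{f}$ with $F$ as described. I expand $F|_{Q_{\mathcal{A}}}$ in the orthonormal system of exponentials on $Q_{\mathcal{A}}$ to obtain $\ell^2$ Fourier coefficients $\{c_k\}$, and set $\tilde{\psi}:=\sum_{k}c_kT_{\mathcal{A}k}f$; this series converges in $L^2(\mathbb{R}^d)$ by the Bessel property, so $\tilde{\psi}\in V(f)$. Running the forward calculation on $\tilde{\psi}$ gives $\hat{\tilde{\psi}}=F\hat{f}=\hat{\psi}$, and injectivity of $\mathcal{F}$ forces $\psi=\tilde{\psi}\in V(f)$.

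The main technical hurdle is justifying the termwise Fourier transform and the passage from $\ell^2$-convergence of the partial Fourier sums of $F$ to $L^2(\mathbb{R}^d)$-convergence of $\sum_{k}c_kT_{\mathcal{A}k}f$. This is precisely what the Plancherel computation in Proposition \ref{prkh 3.3} delivers, since it identifies $\|\sum_{k}c_kT_{\mathcal{A}k}f\|^2$ with an integral of $|F|^2\Phi_f^{\mathcal{A}}$ over a fundamental domain, putting the two convergences on the same footing under the Bessel bound $\Phi_f^{\mathcal{A}}\le B$.
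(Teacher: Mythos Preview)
Your argument is correct and follows essentially the same route as the paper: use the frame/Bessel property to identify $V(f)$ with $\ell^2$-combinations $\sum_k c_kT_{\mathcal{A}k}f$, pass to the Fourier side to get $\hat\psi=F\hat f$ with $F=\sum_k c_kE_{-\mathcal{A}k}$, and for the converse expand $F$ in the exponential basis on the fundamental domain and rebuild $\psi$ from its $\ell^2$ coefficients. In fact your converse is more carefully written than the paper's own version---your explicit construction of $\tilde\psi$ and the use of the Plancherel identity from Proposition~\ref{prkh 3.3} to match the $L^2$ limits is exactly the justification the paper's terse converse omits.
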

\begin{proof}
Let $\psi\in \overline{span}\{\tau_{\mathcal{A}}(f)\}$, then there exists $\{c_k\}_{k\in \mathbb{Z}^d}\subseteq l^2(Q_{\mathcal{A}})$,  such that $\psi=\sum_{k\in \mathbb{Z}^d} c_k T_{\mathcal{A}k}f$, hence $\hat{\psi}=\sum_{k\in \mathbb{Z}^d} c_k E_{-\mathcal{A}k}\hat{f}$. So $\hat{\psi}=F\cdot \hat{f}$, for $F=\sum_{k\in \mathbb{Z}^d}c_kE_{-\mathcal{A}k}$\\
Conversely, suppose  $\hat{\psi}=F\cdot \hat{f}$,  then $\psi=\hat{F}\cdot f$ and $\psi \big|_{Q_{\mathcal{A}}}$ is $\Lambda$-periodic. So $F\in L^2(Q_{\mathcal{A}})$ and $F=\sum_{k\in \mathbb{Z}^d} c_k E_{-\mathcal{A}k}$.
\end{proof}
At this point,  we recall the fact that a  sequence $\{f_k\}_{k=1}^{\infty}$ in a Hilbert space $\mathcal{H}$ is a Riesz basis for $\mathcal{H}$, if and only if $\{f_k\}_{k=1}^{\infty}$ is a bounded unconditional basis for  $\mathcal{H}$, and that  the functions $e_{k}(\gamma):=e^{-2\pi i(\mathcal{A}^T)^{-1}k\cdot \gamma}$ is an orthonormal basis for $L^2(Q_{\mathcal{A}})$. This fact is applying in the following lemma to characterize when $\varepsilon_{\mathcal{A}}(\psi)$ is a Riesz basis for the Hilbert space $L^2(Q_{\mathcal{A}})$.
\begin{lemma}\label{lekh3.7}
Given $\psi \in L^2(Q_{\mathcal{A}})$, $\varepsilon_{\mathcal{A}}(\psi)$ is an unconditional basis for $L^2(Q_{\mathcal{A}})$ if and only if there exist $A,B>0$ such that $A\leq |\psi (\gamma)|\leq B$ for a.e. $\gamma$. In this case $\varepsilon_{\mathcal{A}}(\psi)$ is a Riesz basis for $L^2(Q_{\mathcal{A}})$.
\end{lemma}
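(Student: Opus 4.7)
The plan is to leverage the multiplication operator $M_\psi : g \mapsto \psi g$ on $L^2(Q_{\mathcal{A}})$ together with Lemma~\ref{lekh3.4} and Theorem~\ref{thkh3..3.5}, exploiting the fact recalled just above the lemma that an unconditional basis with uniformly bounded (and bounded away from zero) norms is a Riesz basis.

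For the ``if'' direction, suppose $A \leq |\psi(\gamma)| \leq B$ a.e. Then $M_\psi$ is a bounded operator on $L^2(Q_{\mathcal{A}})$ with $\|M_\psi\| \leq B$, and since $|1/\psi| \leq 1/A$ a.e., the operator $M_{1/\psi}$ provides a bounded inverse. Thus $M_\psi$ is a bounded bijection on $L^2(Q_{\mathcal{A}})$. Since $\{e_k\}_{k \in \mathbb{Z}^d}$ is an orthonormal basis for $L^2(Q_{\mathcal{A}})$ and $M_\psi e_k = \psi e_k = E_{\mathcal{A} k}\psi$, the sequence $\varepsilon_{\mathcal{A}}(\psi) = \{M_\psi e_k\}_{k\in\mathbb{Z}^d}$ is by definition a Riesz basis for $L^2(Q_{\mathcal{A}})$, which in turn is an unconditional basis. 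This gives both the unconditional-basis conclusion and the Riesz-basis ``in this case'' clause at once.

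For the ``only if'' direction, suppose $\varepsilon_{\mathcal{A}}(\psi)$ is an unconditional basis. Every element has the same norm $\|E_{\mathcal{A}k}\psi\|_{L^2(Q_{\mathcal{A}})} = \|\psi\|_{L^2(Q_{\mathcal{A}})}$, which must be strictly positive (otherwise $\varepsilon_{\mathcal{A}}(\psi)$ is identically zero and cannot be a basis). So the norms are trivially uniformly bounded and uniformly bounded below, and by the recalled characterization, $\varepsilon_{\mathcal{A}}(\psi)$ is a Riesz basis, hence a frame for $L^2(Q_{\mathcal{A}})$ with some bounds $A',B'>0$. Completeness in $L^2(Q_{\mathcal{A}})$ combined with Lemma~\ref{lekh3.4} forces $N_\psi^{\mathcal{A}}$ to have measure zero: otherwise $\overline{\operatorname{span}}\,\varepsilon_{\mathcal{A}}(\psi)$ would be a proper closed subspace. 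Applying Theorem~\ref{thkh3..3.5} then yields $A' \leq |\psi(\gamma)|^2 \leq B'$ for a.e.\ $\gamma \notin N_\psi^{\mathcal{A}}$, which is a.e.\ on $Q_{\mathcal{A}}$; taking square roots gives the desired estimate.

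I do not expect any substantial obstacle, as the lemma is really a packaging of previously established machinery. The only point requiring slight care is the equivalence between ``unconditional basis'' and ``Riesz basis,'' which relies on the norms being bounded above and below; this is automatic here because $\|E_{\mathcal{A}k}\psi\|$ is independent of $k$, and the basis hypothesis forces $\psi \not\equiv 0$. A second small point is ensuring the lower bound holds a.e.\ on all of $Q_{\mathcal{A}}$ rather than merely off $N_\psi^{\mathcal{A}}$; this is where the completeness argument via Lemma~\ref{lekh3.4} is needed, to rule out a null set of zeros for $\psi$.
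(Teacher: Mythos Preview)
Your proof is correct and, for the ``only if'' direction, follows essentially the same route as the paper: constant norms $\|\psi e_k\|=\|\psi\|$ upgrade the unconditional basis to a bounded unconditional basis, hence a Riesz basis, hence a frame, and then Theorem~\ref{thkh3..3.5} gives the pointwise bounds on $|\psi|^2$. You are in fact more thorough than the paper on two counts: you supply the ``if'' direction via the multiplication operator $M_\psi$ (the paper's proof treats only the forward implication), and you explicitly invoke Lemma~\ref{lekh3.4} to force $|N_\psi^{\mathcal{A}}|=0$ so that the lower bound from Theorem~\ref{thkh3..3.5} holds a.e.\ on all of $Q_{\mathcal{A}}$, a point the paper passes over silently.
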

\begin{proof}
Suppose that $\varepsilon_{\mathcal{A}}(\psi)$ is an unconditional basis for $L^2(Q_{\mathcal{A}})$, then,  since $\|\psi e_k\|_{L^2} =\| \psi \|_{L^2}$ for every $k$,  so it  is a  bounded unconditional basis and therefore is a Riesz basis. Since every Riesz basis is an exact frame, so by Theorem \ref{thkh3..3.5}., we must have  $A \leq |\psi (\gamma)|^2 \leq B$, a.e..
\end{proof}
Now by using Lemma \ref{lekh3.7}., we are able to show that $\tau_{\mathcal{A}}(f)$ is a Riesz sequence if and only if $A\leqslant \Phi_{f}^{\mathcal{A}}\leqslant B$ a.e.,  for some $0<A\leq B<\infty$.
\begin{proposition}\label{prkh 3.11}
Let $f\in L^2(\mathbb{R}^d)$ and  $A,B>0$ ,$\tau_{\mathcal{A}}(f)$ is a Riesz sequence with bounds $A,B$ if and only if, $A\leqslant \Phi_{f}^{\mathcal{A}}(\gamma)\leqslant B$, a.e. $\gamma \in Q_{\mathcal{A}}$.
\end{proposition}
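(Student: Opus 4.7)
The plan is to bootstrap the isometric identity already obtained in the proof of Proposition~\ref{prkh 3.3} and then appeal to Lemma~\ref{lekh3.7}, which pins down exactly the Riesz-basis condition for an exponential system twisted by a multiplier.

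First, I would resurrect the key identity from the computation leading up to Proposition~\ref{prkh 3.3}: for any finitely supported sequence $\{c_k\}_{k\in\mathbb{Z}^d}\subset\mathbb{C}$,
\[
\Bigl\|\sum_{k\in\mathbb{Z}^d} c_k T_{\mathcal{A}k}f\Bigr\|^2_{L^2(\mathbb{R}^d)} = \int_{[0,1]^d} |\psi(\gamma)|^2\,\Phi_f^{\mathcal{A}}(\gamma)\,d\gamma,
\]
where $\psi(\gamma)=\sum_k c_k e^{-2\pi i k\cdot \gamma}$. Since $\{e^{-2\pi i k\cdot\gamma}\}_{k\in\mathbb{Z}^d}$ is an orthonormal basis of $L^2([0,1]^d)$, Parseval gives $\sum_k|c_k|^2 = \|\psi\|_{L^2([0,1]^d)}^2$. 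Consequently, the Riesz-sequence inequalities $A\sum|c_k|^2\le\bigl\|\sum c_kT_{\mathcal{A}k}f\bigr\|^2\le B\sum|c_k|^2$, required over all finite sequences $\{c_k\}$, are equivalent to
\[
A\,\|\psi\|_{L^2([0,1]^d)}^2 \le \int_{[0,1]^d}|\psi|^2\,\Phi_f^{\mathcal{A}}\,d\gamma \le B\,\|\psi\|_{L^2([0,1]^d)}^2
\]
holding for every trigonometric polynomial $\psi$.

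The $(\Leftarrow)$ direction then follows immediately from the pointwise estimate $A\le\Phi_f^{\mathcal{A}}\le B$. For $(\Rightarrow)$, I would extend the inequality from trigonometric polynomials to all of $L^2([0,1]^d)$ by density (the upper bound furnishes uniform control), then test on indicator functions $\psi=\chi_E$ of arbitrary measurable sets $E\subset[0,1]^d$ to obtain $A|E|\le\int_E\Phi_f^{\mathcal{A}}\le B|E|$; a Lebesgue-differentiation argument yields $A\le\Phi_f^{\mathcal{A}}\le B$ almost everywhere. Alternatively, and more in the spirit of the paper, the displayed identity says precisely that $\varepsilon_{\mathcal{A}}(\sqrt{\Phi_f^{\mathcal{A}}})$ is a Riesz basis of $L^2([0,1]^d)$ with bounds $A,B$, and Lemma~\ref{lekh3.7}, applied with $A,B$ replaced by their square roots, delivers the equivalent pointwise bound on $\sqrt{\Phi_f^{\mathcal{A}}}$.

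The only real obstacle I anticipate is the forward direction: extracting an almost-everywhere pointwise bound on $\Phi_f^{\mathcal{A}}$ from an integral inequality initially available only on trigonometric polynomials. The density extension is automatic once the upper bound provides uniform boundedness, but the indicator-function test (or the reduction to Lemma~\ref{lekh3.7}) should be spelled out explicitly rather than waved away.
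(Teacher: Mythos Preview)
Your proposal is correct, and the alternative you sketch at the end---reducing to Lemma~\ref{lekh3.7} via the multiplier $\sqrt{\Phi_f^{\mathcal{A}}}$---is exactly the paper's route, only stated with considerably more care than the paper manages. The paper's proof is essentially a one-line appeal to Lemma~\ref{lekh3.7}, implicitly using the identity from Proposition~\ref{prkh 3.3} to transfer the Riesz-sequence condition on $\tau_{\mathcal{A}}(f)$ to the exponential system $\{e_k\sqrt{\Phi_f^{\mathcal{A}}}\}$ on the cube; it never spells out the transfer, nor does it address the $(\Leftarrow)$ direction or the passage from ``Riesz sequence'' to ``unconditional basis'' needed to invoke the lemma.

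Your primary route is a genuinely different, more elementary argument: you stay inside the integral identity, extend from trigonometric polynomials to $L^2$ by density (legitimate once the upper bound is in hand), and then test on indicators to recover the pointwise bounds via Lebesgue differentiation. This bypasses the unconditional-basis machinery of Lemma~\ref{lekh3.7} entirely and has the virtue of being completely self-contained. The trade-off is that the paper's approach, once Lemma~\ref{lekh3.7} is granted, is a one-liner, whereas yours requires the density-plus-differentiation step to be written out---precisely the point you flag as the only real obstacle. Either route is sound; yours is the more transparent of the two.
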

\begin{proof}
 Suppose that $\tau_{ \mathcal{A}}(f)$ is an unconditional basis for $L^2(\mathbb{R}^d)$ and $\|\Phi_{f}^{\mathcal{A}}e_k\|_{L^2}=\|\Phi_{f}^{ \mathcal{A}}\|_{L^2}$ for every $k$. By Lemma \ref{lekh3.7}.,  it is a  bounded unconditional basis, and therefore a  Riesz basis, with  $A\leq \Phi_{f}^{ \mathcal{A}}(\gamma)\leq B$ a.e..
\end{proof}
In the next proposition, we determine the Fourier coefficients of the periodic function $\Phi_{f}^{\mathcal{A}}$.
\begin{proposition}\label{prkh5.2}
For  $f\in L^2(\mathbb{R}^d)$, the  Fourier coefficients of $\Phi_{f}^{\mathcal{A}}$ are
\[
c_k=\int_{\mathbb{R}^d}f(\lambda)\overline{f(\lambda+k)}d\lambda,  \quad k\in \mathbb{Z}^d.
\]
\end{proposition}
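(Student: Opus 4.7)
My plan is to compute the Fourier coefficients of $\Phi_f^{\mathcal{A}}$ by reversing the periodization that defines it and then reading off the resulting integral as an inner product of translates of $f$ via Plancherel's theorem. The crucial observation is that equation~\eqref{eqkh3.1} already performs essentially this calculation in one direction, expressing $\langle T_{\mathcal{A}k_1}f,T_{\mathcal{A}k_2}f\rangle$ as an integral of $\Phi_f^{\mathcal{A}}$ against a character on $Q_{\mathcal{A}}$; the identity sought here is the companion obtained by running that calculation backwards with $k_2=0$.

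Concretely, I would begin by writing the $k$-th Fourier coefficient of the periodic function $\Phi_f^{\mathcal{A}}$ as the integral of $\Phi_f^{\mathcal{A}}(\gamma)$ against the appropriate exponential character over the fundamental domain, and then substitute the defining series for $\Phi_f^{\mathcal{A}}$. Swapping sum and integral is legitimate by the absolute-integrability argument of the Lemma proved after the preliminaries, and since the character is invariant under the relevant integer shifts, the periodized sum of translates of $|\hat f((\mathcal{A}^T)^{-1}\cdot)|^2$ unfolds back to a single integral of $|\hat f((\mathcal{A}^T)^{-1}\eta)|^2$ against the character over all of $\mathbb{R}^d$.

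Next, a linear change of variables $\xi=(\mathcal{A}^T)^{-1}\eta$ absorbs the $|\det\mathcal{A}|^{-1}$ factor from the definition of $\Phi_f^{\mathcal{A}}$ and simultaneously rewrites the character as $e^{-2\pi i\mathcal{A}k\cdot\xi}$. Plancherel's theorem now identifies $\int_{\mathbb{R}^d}|\hat f(\xi)|^2 e^{-2\pi i\mathcal{A}k\cdot\xi}\,d\xi$ with $\langle T_{\mathcal{A}k}f,f\rangle_{L^2(\mathbb{R}^d)}$, and a final translation $\mu=\lambda-\mathcal{A}k$ inside the latter inner product produces the advertised expression $\int_{\mathbb{R}^d}f(\mu)\overline{f(\mu+\mathcal{A}k)}\,d\mu$.

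The main obstacle is strictly bookkeeping: the definition of $\Phi_f^{\mathcal{A}}$ carries a $|\det\mathcal{A}|^{-1}$ factor, the fundamental domain $Q_{\mathcal{A}}$ has volume $|\det\mathcal{A}|$, and the exponentials $e^{-2\pi i(\mathcal{A}^T)^{-1}k\cdot\gamma}$ used as the Fourier basis in \eqref{eqkh3.1} are not $L^2(Q_{\mathcal{A}})$-normalized. These three factors must be tracked carefully so that the constant appearing in the final formula matches the statement exactly; in particular, the shift variable $k$ in the claim should be read as the lattice element corresponding to $\mathcal{A}k$, consistent with the convention used throughout the paper for the translations defining $\tau_{\mathcal{A}}(f)$. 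Beyond this, the proof uses nothing more than Plancherel, Tonelli, and a linear change of variable.
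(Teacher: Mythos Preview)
Your proposal is correct and follows essentially the same route as the paper: write the Fourier coefficient as an integral of $\Phi_f^{\mathcal{A}}$ against a character over a fundamental domain, unfold the periodization to an integral over $\mathbb{R}^d$, change variables to absorb the determinant factor, and invoke Plancherel to convert to an inner product of $f$ with a translate. The only cosmetic difference is that the paper exploits the $\mathbb{Z}^d$-periodicity of $\Phi_f^{\mathcal{A}}$ directly and integrates over $[0,1]^d$ with characters $e^{-2\pi i n\cdot\gamma}$, rather than working on $Q_{\mathcal{A}}$ with the characters from~\eqref{eqkh3.1}; your observation that the final shift should really be $\mathcal{A}k$ is well taken and reflects a notational slip in the paper's statement.
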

\begin{proof}
Let $f\in L^2(\mathbb{R}^d)$. Then by using the $\mathbb{Z}^d$-periodicity of $\Phi_{f}^{\mathcal{A}}$, the Fourier coefficients $c_n$ are,
\begin{align*}
c_n&=\int_{[0,1]^d}\Phi_{f}^{\mathcal{A}}(\gamma)\cdot e^{-2\pi i\gamma\cdot n}d\gamma\\
&=\frac{1}{|\det\mathcal{A}|}\int_{[0,1]^d}\sum_{k\in \mathbb{Z}^d}|\hat{f}\big((\mathcal{A}^T)^{-1}(\gamma+k)\big)|^2\cdot e^{-2\pi i\gamma\cdot n}d\gamma\\
&=\frac{1}{|\det \mathcal{A}|} \int_{\mathbb{R}^d}|\hat{f}\big((\mathcal{A}^T)^{-1}(\gamma)\big)|^2\cdot e^{-2\pi i\gamma\cdot n}d\gamma\\
&=\int_{\mathbb{R}^d}|\hat{f}(\gamma)|^2\cdot e^{-2\pi i(\mathcal{A}^T)^{-1}\gamma\cdot n}d\gamma\\
&=\int_{\mathbb{R}^d}\hat{f}(\gamma)\cdot  \overline{(E_{\mathcal{A}k}\hat{f})}(\gamma)d\gamma\\
&=\langle \hat{f}, T_{-\mathcal{A}k}\hat{f}\rangle\\
&=\int_{\mathbb{R}^d}f(\lambda)\overline{T_{-\mathcal{A}k}f(\lambda)}d\lambda\\
&=\int_{\mathbb{R}^d}f(\lambda)\overline{f(\lambda+k)}d\lambda.
\end{align*}
\end{proof}
\begin{remark}\label{prkh3.12}
Note that if $f$ is of compact support, then Proposition \ref{prkh5.2}., implies that only finitely many of the Fourier coefficients of $\Phi_{f}^{\mathcal{A}}$ are non-zero, and therefore $\Phi_{f}^{\mathcal{A}}$ is continuous.
\end{remark}
In this following theorem, we determine conditions under which for $f\in L^2(\mathbb{R}^d)$ with compact support, $\tau_{\mathcal{A}}(f)$ is a Bessel sequence, that cannot be an overcomplete frame sequence. This determine under which for conditions  such $f$, $\tau_{\mathcal{A}}(f)$ is a Riesz sequence.
\begin{theorem}
Assume that $f\in L^2(\mathbb{R}^d)$ has compact support,  then the following hold:
\begin{enumerate}
\item[i)]
$\tau_{\mathcal{A}}(f)$ is a Bessel sequence.
\item[ii)]
$\tau_{\mathcal{A}}(f)$ cannot be an overcomplete frame sequence.
\end{enumerate}
\end{theorem}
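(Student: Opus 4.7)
My plan is to deduce both parts from the earlier machinery: Proposition \ref{prkh 3.3} (Bessel characterization), Theorem \ref{thkh3.7} (frame sequence characterization), Proposition \ref{prkh 3.11} (Riesz sequence characterization), and crucially Remark \ref{prkh3.12}, which tells us that compact support of $f$ forces $\Phi_{f}^{\mathcal{A}}$ to be a trigonometric polynomial, hence continuous on $\mathbb{R}^d$.

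For part (i), I would first note that a compactly supported $L^2$ function lies in $L^1(\mathbb{R}^d)$, so Proposition \ref{prkh5.2} applies and the Fourier coefficients $c_k=\int f(\lambda)\overline{f(\lambda+k)}\,d\lambda$ vanish whenever $k$ lies outside the (bounded) difference set $\mathrm{supp}(f)-\mathrm{supp}(f)$ intersected with $\mathbb{Z}^d$. By Remark \ref{prkh3.12} this makes $\Phi_{f}^{\mathcal{A}}$ a finite trigonometric polynomial, therefore continuous; being also $\mathbb{Z}^d$-periodic it is bounded on $\mathbb{R}^d$. Setting $B:=\|\Phi_{f}^{\mathcal{A}}\|_{\infty}$ and invoking Proposition \ref{prkh 3.3} gives that $\tau_{\mathcal{A}}(f)$ is Bessel with bound $B$.

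For part (ii), I would argue by contradiction. Suppose $\tau_{\mathcal{A}}(f)$ is an overcomplete frame sequence, i.e.\ a frame sequence which is not a Riesz sequence. Theorem \ref{thkh3.7} then yields constants $A,B>0$ with $A\le \Phi_{f}^{\mathcal{A}}(\gamma)\le B$ on $Q_{\mathcal{A}}\setminus N_{\Phi_{f}^{\mathcal{A}}}$, while Proposition \ref{prkh 3.11} forces $N_{\Phi_{f}^{\mathcal{A}}}$ to have \emph{positive} measure (otherwise $\tau_{\mathcal{A}}(f)$ would be Riesz). The obstruction is to rule this out using continuity of $\Phi_{f}^{\mathcal{A}}$ from part~(i). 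The open set $U:=\{\gamma\in\mathbb{R}^d:0<\Phi_{f}^{\mathcal{A}}(\gamma)<A\}$ has measure zero by the frame inequality, so by continuity $U=\emptyset$. Hence $\Phi_{f}^{\mathcal{A}}(\mathbb{R}^d)\subseteq\{0\}\cup[A,\infty)$, and since $\mathbb{R}^d$ is connected and $\Phi_{f}^{\mathcal{A}}$ is continuous, the image must lie entirely in one of these two disjoint closed pieces.

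The key step that needs care is closing this last dichotomy. If $\Phi_{f}^{\mathcal{A}}\equiv 0$ then $\hat{f}=0$ almost everywhere, so $f=0$ and $\tau_{\mathcal{A}}(f)$ is not a frame at all, contradicting our assumption. If instead $\Phi_{f}^{\mathcal{A}}\ge A$ everywhere, then $N_{\Phi_{f}^{\mathcal{A}}}$ is empty, contradicting the conclusion of Proposition \ref{prkh 3.11} that it has positive measure in the non-Riesz case. Either way we reach a contradiction, which proves (ii). The only subtle point is the passage from ``$U$ open and of measure zero'' to ``$U=\emptyset$'', which is the engine that converts continuity into the clean dichotomy; this is where compact support of $f$ does the real work.
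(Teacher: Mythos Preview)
Your proposal is correct and follows essentially the same strategy as the paper: use Remark \ref{prkh3.12} to get continuity of $\Phi_{f}^{\mathcal{A}}$, then feed this into Proposition \ref{prkh 3.3} for (i) and into the frame/Riesz characterizations (Theorem \ref{thkh3.7}, Proposition \ref{prkh 3.11}) for (ii). In fact your argument for (ii) is more complete than the paper's, which simply asserts that continuity together with Proposition \ref{prkh 3.11} forces the Riesz property; your connectedness step (the open set $U=\{0<\Phi_{f}^{\mathcal{A}}<A\}$ has measure zero, hence is empty, so the continuous image of $\mathbb{R}^d$ lies entirely in $\{0\}$ or in $[A,\infty)$) is exactly the justification that passage needs.
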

\begin{proof}
i)
 By Remark \ref{prkh3.12}., $\Phi_{f}^{\mathcal{A}}$ is continuous. Since $(\mathcal{A}^T)^{-1}[0,1]^d$ is compact, so $\Phi_{f}^{\mathcal{A}}$ is bounded. Therefore there exists $B>0$ such that $|\Phi_{f}^{\mathcal{A}}(\gamma)|\leqslant B$, a.e., consequently, by Proposition \ref{prkh 3.3}., $\tau_{\mathcal{A}}(f)$ is a Bessel sequence.\\ For proving (ii), by Theorem  \ref{thkh3.7}., $\tau_{\mathcal{A}}(f)$ is a frame sequence. Also $\Phi_{f}^{\mathcal{A}}$ is continuous. So Proposition \ref{prkh 3.11}., implies that it is a Riesz sequence, that is, it is a Riesz basis for $\overline{span}\{\tau_{\mathcal{A}}(f)\}$. Thus their members are linearly independent and therefore it is not overcomplete.
\end{proof}
\begin{corollary}
Assume that $f\in L^2(\mathbb{R}^d)$ is compactly supported. Then $\tau_{\mathcal{A}}(f)$ is a Riesz sequence if and only if for every $\gamma\in \mathbb{R}^d$, there exists a $k\in \mathbb{Z}^d$ such that $\hat{f}\big((\mathcal{A}^T)^{-1}(\gamma+k)\big)\neq 0$.
\end{corollary}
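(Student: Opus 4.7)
The plan is to combine Proposition \ref{prkh 3.11} with the continuity of $\Phi_f^{\mathcal{A}}$ supplied by Remark \ref{prkh3.12}, leveraging compactness of the period cell to pass between ``nowhere zero'' and ``uniformly bounded below''.

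First I would observe that the pointwise non-vanishing condition in the corollary is equivalent to $\Phi_f^{\mathcal{A}}(\gamma) > 0$ for every $\gamma \in \mathbb{R}^d$. Indeed, $\Phi_f^{\mathcal{A}}(\gamma)$ is $|\det \mathcal{A}|^{-1}$ times a sum of non-negative squared moduli, so it vanishes at $\gamma$ precisely when $\hat{f}\big((\mathcal{A}^T)^{-1}(\gamma+k)\big)=0$ for every $k \in \mathbb{Z}^d$. Next, since $f$ has compact support, Remark \ref{prkh3.12} says $\Phi_f^{\mathcal{A}}$ is continuous; a substitution $k \mapsto k+m$ in the defining series shows it is also $\mathbb{Z}^d$-periodic, so its global behavior is controlled by its restriction to the compact cube $[0,1]^d$.

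With these preparations, the forward direction proceeds as follows: assuming the non-vanishing condition, the continuous periodic function $\Phi_f^{\mathcal{A}}$ attains its infimum and supremum on the compact cube $[0,1]^d$, yielding constants $A,B>0$ with $A \leq \Phi_f^{\mathcal{A}}(\gamma) \leq B$ for every $\gamma$. Proposition \ref{prkh 3.11} then gives that $\tau_{\mathcal{A}}(f)$ is a Riesz sequence. Conversely, if $\tau_{\mathcal{A}}(f)$ is a Riesz sequence, Proposition \ref{prkh 3.11} provides $A>0$ with $\Phi_f^{\mathcal{A}}(\gamma) \geq A$ almost everywhere, and continuity of $\Phi_f^{\mathcal{A}}$ upgrades this to a genuine pointwise lower bound; in particular $\Phi_f^{\mathcal{A}}$ has no zeros, giving the non-vanishing condition.

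I do not anticipate any real obstacle. The principal ingredient is the standard compactness argument that promotes ``nowhere zero'' to ``bounded away from zero'' for a continuous function on a compact set, together with the density upgrade from an a.e. inequality to a pointwise one. Both are routine, so the substantive content of the corollary is really contained in Remark \ref{prkh3.12} (continuity from compact support) and Proposition \ref{prkh 3.11} (characterization of the Riesz property via $\Phi_f^{\mathcal{A}}$).
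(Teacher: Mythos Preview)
Your proposal is correct and follows essentially the same route as the paper: both arguments hinge on the continuity of $\Phi_f^{\mathcal{A}}$ from Remark~\ref{prkh3.12}, the characterization in Proposition~\ref{prkh 3.11}, and the compactness of the period cell to pass between ``nowhere zero'' and ``bounded below.'' Your write-up is in fact more explicit than the paper's about the a.e.-to-pointwise upgrade via continuity and about the equivalence of the non-vanishing hypothesis with $\Phi_f^{\mathcal{A}}>0$ everywhere.
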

\begin{proof}
By Proposition \ref{prkh 3.11}.,  $\Phi_{f}^{\mathcal{A}}$ is continuous. Suppose there exists $\gamma\in \mathbb{R}^d$ such that for every $k\in \mathbb{Z}^d$, $\hat{f}\big((\mathcal{A}^T)^{-1}(\gamma+k)\big)=0$. Then $\Phi_{f}^{\mathcal{A}}=0$, which contradicts Proposition \ref{prkh 3.11}. Thus $\hat{f}\big((\mathcal{A}^T)^{-1}(\gamma+k)\big)\neq 0$.\\
For the converse, assume that for any $\gamma\in \mathbb{R}^d$, $\hat{f}\big((\mathcal{A}^T)^{-1}(\gamma+k)\big)\neq 0$. This implies that $\tau_{\mathcal{A}}(f)$ is a Riesz sequence. Now, $\Phi_{f}^{\mathcal{A}}(\gamma)\neq 0$, implies that $\Phi_{f}^{\mathcal{A}}$ is continuous and bounded. Thus $\Phi_{f}^{\mathcal{A}}(\gamma)>0$, that is, it takes its  minimum, which is strictly positive. Therefore, it is a Riesz sequence which is bounded from below.
\end{proof}
In what follows, by using Lemma \ref{lekh3.14}. and Proposition \ref{lekh3.15}., we determine conditions under which $\{T_{\mathcal{A}k}f+T_{\mathcal{A}k+n}f\}_{k \in \mathbb{Z}^d}$ is a frame for $\overline{span}\{\tau_{\mathcal{A}}(f)\}$.
\begin{lemma}\label{lekh3.14}
Let $\{\psi_k\}_{k\in \mathbb{Z}^d}$ is a Bessel sequence and complete in  Hilbert space $\mathcal{H}$ and $n_{0}\in\mathbb{Z}^d$ is given,  then $\{\psi_k+\psi_{k+n_0}\}_{k\in \mathbb{Z}^d}$ is complete.
\end{lemma}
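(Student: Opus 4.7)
The plan is to prove completeness by showing that the only $f \in \mathcal{H}$ with $\langle f, \psi_k + \psi_{k+n_0}\rangle = 0$ for every $k \in \mathbb{Z}^d$ is $f = 0$. The case $n_0 = 0$ is immediate since $\psi_k + \psi_{k+n_0} = 2\psi_k$, so from the outset I would assume $n_0 \neq 0$.

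Set $a_k := \langle f, \psi_k\rangle$. The Bessel hypothesis gives $\sum_{k \in \mathbb{Z}^d} |a_k|^2 \leq B \|f\|^2 < \infty$, so $(a_k) \in \ell^2(\mathbb{Z}^d)$. The orthogonality assumption translates into the recursion $a_k + a_{k+n_0} = 0$ for every $k \in \mathbb{Z}^d$. Iterating this recursion along the orbit of $k$ under translation by $n_0$ yields $a_{k+jn_0} = (-1)^j a_k$ for every $j \in \mathbb{Z}$, and in particular $|a_{k+jn_0}| = |a_k|$.

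The key observation is that, because $n_0 \neq 0$ in the torsion-free group $\mathbb{Z}^d$, each orbit $\{k + jn_0 : j \in \mathbb{Z}\}$ is an infinite set. Therefore the $\ell^2$-summability $\sum_{j \in \mathbb{Z}} |a_{k+jn_0}|^2 = \sum_{j \in \mathbb{Z}} |a_k|^2 < \infty$ forces $a_k = 0$. Since $k$ was arbitrary, $\langle f, \psi_k\rangle = 0$ for all $k \in \mathbb{Z}^d$, and the completeness of $\{\psi_k\}_{k \in \mathbb{Z}^d}$ then yields $f = 0$.

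There is essentially no hard step here; the only subtlety worth flagging explicitly is the need for the Bessel hypothesis (without it one cannot pass from the orbit-constancy $|a_{k+jn_0}| = |a_k|$ to $a_k = 0$), and the easy but necessary observation that $jn_0 \neq 0$ for $j \neq 0$ in $\mathbb{Z}^d$, which ensures the orbits are genuinely infinite. I would present the argument in a few lines along these lines.
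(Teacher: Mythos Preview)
Your argument is correct and follows essentially the same route as the paper: from $\langle f,\psi_k+\psi_{k+n_0}\rangle=0$ one gets $a_{k+n_0}=-a_k$, hence $|a_k|$ is constant along each $n_0$-orbit, and the Bessel (i.e.\ $\ell^2$) condition forces $a_k=0$, after which completeness of $\{\psi_k\}$ finishes. The paper only writes out the special case $d=1$, $n_0=1$ and asserts the general case is similar; your version is actually cleaner in that it treats general $d$ and $n_0$ directly by invoking torsion-freeness of $\mathbb{Z}^d$ to ensure the orbits are infinite.
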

\begin{proof}
We proof the lemma for the case $d=1$ and  $n_{0}=1$.  The general case is almost the same, suppose that $\psi\in \mathcal{H}$ is arbitrary such that for every $k$, $\langle \psi,\psi_{k}+\psi_{k+1}\rangle=0$. We show that $\psi=0$.   Indeed $\langle \psi,\psi_{k}+\psi_{k+1}\rangle=0 $,  implies that $\langle \psi, \psi_k\rangle =-\langle \psi, \psi_{k+1}\rangle$. Then for every $k$, $|\langle\psi, \psi_k\rangle |=|\langle \psi, \psi_{k+1}\rangle |$, hence $|\langle \psi, \psi_k \rangle |$ is constant. On the other hand, by our assumption,  $\{\psi_k\}_{k\in \mathbb{Z}}$ is a Bessel sequence. So,
\[
  \sum_{k\in \mathbb{Z}}|\langle \psi, \psi_k \rangle |^2\leqslant B \|\psi\|^2,
  \]
   for some  $B>0$. Hence  $\langle \psi, \psi_k \rangle =0$,
   for all $k$. Thus $\psi =0$, since $\{\psi_k\}_{k\in \mathbb{Z}}$ is complete.
\end{proof}
\begin{proposition}\label{lekh3.15}
For  $f \in L^2(\mathbb{R}^d)$ let  $\tilde{f}=f+T_{\mathcal{A}n}f$. Then $\Phi_{\tilde{f}}^{\mathcal{A}}(\gamma)=|1+e^{-2\pi i (\mathcal{A}^T)^{-1}n\cdot \gamma}|^2\Phi_{f}^{\mathcal{A}}(\gamma)$,  and $\Phi_{\tilde{f}}^{\mathcal{A}}(\gamma)$ determine a  frame for $V(f):=\overline{span}\{\tau_{\mathcal{A}}(f)\}$ if and only if $\Phi_{f}^{\mathcal{A}}(\gamma)$ determines a  frame for $V(f)$ and $e^{-2\pi i (\mathcal{A}^T)^{-1}n\cdot  \gamma}\neq -1$.
\end{proposition}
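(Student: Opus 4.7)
The plan is to reduce the identity for $\Phi_{\tilde f}^{\mathcal{A}}$ to a direct Fourier-side calculation and then read off the frame characterization from Theorem \ref{thkh3.7}. First I would compute
\[
\widehat{\tilde f}(\xi) = \hat f(\xi) + \widehat{T_{\mathcal{A}n}f}(\xi) = \bigl(1+e^{-2\pi i \mathcal{A}n\cdot \xi}\bigr)\hat f(\xi),
\]
using the translation rule for the Fourier transform already invoked in the preliminaries. Taking $|\cdot|^2$ and substituting $\xi=(\mathcal{A}^T)^{-1}(\gamma+k)$ into the definition of $\Phi^{\mathcal A}$ gives a sum over $k$ whose typical term is $|1+e^{-2\pi i \mathcal{A}n\cdot (\mathcal{A}^T)^{-1}(\gamma+k)}|^2\,|\hat f((\mathcal{A}^T)^{-1}(\gamma+k))|^2$. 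The key algebraic observation is that $\mathcal{A}n\cdot (\mathcal{A}^T)^{-1}(\gamma+k) = n^T(\gamma+k) = n\cdot \gamma + n\cdot k$, and since $n,k\in\mathbb{Z}^d$ the integer part contributes a factor of $e^{-2\pi i n\cdot k}=1$. Hence the modulus-squared factor is independent of $k$ and pulls out of the sum, yielding the stated identity $\Phi_{\tilde f}^{\mathcal{A}}(\gamma)=|1+e^{-2\pi i(\mathcal{A}^T)^{-1}n\cdot\gamma}|^2\,\Phi_{f}^{\mathcal{A}}(\gamma)$ in the authors' notation.

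For the frame equivalence I would invoke Theorem \ref{thkh3.7}: $\Phi_g^{\mathcal A}$ determines a frame for $V(g)=\overline{\mathrm{span}}\,\tau_{\mathcal A}(g)$ exactly when there exist $A,B>0$ with $A\leq \Phi_g^{\mathcal A}(\gamma)\leq B$ a.e.\ off the zero set $N_{\Phi_g^{\mathcal A}}$. In the direction ($\Leftarrow$), the factor $|1+e^{-2\pi i(\mathcal{A}^T)^{-1}n\cdot\gamma}|^2$ is uniformly bounded above by $4$, so the upper bound for $\tilde f$ is immediate from the upper bound for $f$; the lower bound then reduces to showing that $|1+e^{-2\pi i(\mathcal{A}^T)^{-1}n\cdot\gamma}|^2$ stays bounded away from zero a.e.\ on $Q_{\mathcal{A}}\setminus N_{\Phi_f^{\mathcal A}}$, which is precisely where the hypothesis $e^{-2\pi i(\mathcal{A}^T)^{-1}n\cdot\gamma}\neq -1$ is needed. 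For the converse I would reason contrapositively: if $e^{-2\pi i(\mathcal{A}^T)^{-1}n\cdot\gamma}=-1$ on a set of positive measure inside $Q_{\mathcal{A}}\setminus N_{\Phi_f^{\mathcal A}}$, the product formula forces $\Phi_{\tilde f}^{\mathcal A}$ to vanish there, so $N_{\Phi_{\tilde f}^{\mathcal A}}\supsetneq N_{\Phi_f^{\mathcal A}}$ and $V(\tilde f)\subsetneq V(f)$; hence $\tau_{\mathcal A}(\tilde f)$ cannot be complete in $V(f)$ let alone a frame for it. An analogous contradiction handles the case where $\tau_{\mathcal A}(f)$ itself fails the frame inequalities.

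The main obstacle is making the lower bound precise. Read literally, the pointwise condition $e^{-2\pi i(\mathcal{A}^T)^{-1}n\cdot\gamma}\neq -1$ still allows the factor $|1+e^{-2\pi i(\mathcal{A}^T)^{-1}n\cdot\gamma}|$ to approach zero on $Q_{\mathcal{A}}$, which would destroy the lower frame bound. I would therefore interpret the hypothesis as being essentially bounded away from $-1$ (equivalently, the closed set $\{\gamma\in Q_{\mathcal{A}}: 1+e^{-2\pi i(\mathcal{A}^T)^{-1}n\cdot\gamma}=0\}$ either has positive distance from $Q_{\mathcal{A}}\setminus N_{\Phi_f^{\mathcal A}}$, or is absorbed into $N_{\Phi_f^{\mathcal A}}$), state this carefully, and then the equivalence collapses to a two-line application of Theorem \ref{thkh3.7} to the explicit product formula derived in the first paragraph.
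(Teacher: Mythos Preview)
Your derivation of the product formula follows exactly the paper's route: compute $\widehat{\tilde f}=(1+e^{-2\pi i\mathcal{A}n\cdot\xi})\hat f$, substitute $\xi=(\mathcal{A}^T)^{-1}(\gamma+k)$, and use $\mathcal{A}n\cdot(\mathcal{A}^T)^{-1}(\gamma+k)=n\cdot(\gamma+k)$ together with $n\cdot k\in\mathbb{Z}$ to pull the modulus factor outside the sum. (Your algebra in fact gives $|1+e^{-2\pi i n\cdot\gamma}|^2$; the paper's own computation has the same endpoint in its last displayed line, so the exponent $(\mathcal{A}^T)^{-1}n\cdot\gamma$ in the statement appears to be a notational slip that you rightly absorbed under ``the authors' notation.'')

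Where you diverge is in the frame equivalence. The paper's proof stops after the identity and simply records the frame characterization from Theorem~\ref{thkh3..3.5}, without actually arguing either implication of the ``if and only if.'' Your proposal goes further: you invoke Theorem~\ref{thkh3.7}, split into the two directions, and---crucially---you isolate the real difficulty, namely that the bare pointwise hypothesis $e^{-2\pi i(\mathcal{A}^T)^{-1}n\cdot\gamma}\neq -1$ does not by itself force $|1+e^{-2\pi i(\mathcal{A}^T)^{-1}n\cdot\gamma}|$ to be bounded away from zero on $Q_{\mathcal{A}}\setminus N_{\Phi_f^{\mathcal{A}}}$, so the lower frame bound can fail. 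Your proposed fix (interpret the hypothesis as ``essentially bounded away from $-1$,'' or equivalently require the zero set of the trigonometric factor to sit inside $N_{\Phi_f^{\mathcal{A}}}$) is the correct way to make the statement true, and once that is in place the equivalence is indeed a two-line consequence of the product formula and Theorem~\ref{thkh3.7}. In short: same computation for the identity, but your treatment of the frame part is more complete than the paper's and correctly flags a gap in the statement that the paper does not address.
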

\begin{proof}
By the definition of $\Phi_{\tilde{f}}^{\mathcal{A}}$, we have
\begin{align*}
\Phi_{\tilde{f}}^{\mathcal{A}}(\gamma)&=\frac{1}{|\det \mathcal{A}|}\sum_{k \in \mathbb{Z}^d}|\mathcal{F}\tilde{f}\big((\mathcal{A}^T)^{-1}(\gamma+k)\big)|^2\\
&=\frac{1}{|\det \mathcal{A}|}\sum_{k \in \mathbb{Z}^d}|(\mathcal{F}f+\mathcal{F}T_{\mathcal{A}n}f)\big((\mathcal{A}^T)^{-1}(\gamma+k)\big)|^2\\
&=\frac{1}{|\det \mathcal{A}|}\sum_{k \in \mathbb{Z}^d}|\hat{f}\big((\mathcal{A}^T)^{-1}(\gamma+k)\big)+E_{-\mathcal{A}n}\hat{f}\big((\mathcal{A}^T)^{-1}(\gamma+k)\big)|^2\\
&=\frac{1}{|\det \mathcal{A}|}\sum_{k \in \mathbb{Z}^d}|\hat{f}\big((\mathcal{A}^T)^{-1}(\gamma+k)\big)\\
&\quad +e^{-2\pi i(\mathcal{A}^T)^{-1}n\cdot (\gamma+k)}\hat{f}\big((\mathcal{A}^T)^{-1}(\gamma+k)\big)|^2\\
&=\frac{1}{|\det \mathcal{A}|}\sum_{k \in \mathbb{Z}^d}|1+e^{-2\pi i(\mathcal{A}^T)^{-1}n\cdot (\gamma+k)}|^2\cdot |\hat{f}\big((\mathcal{A}^T)^{-1}(\gamma+k)\big)|^2\\
&=|1+e^{-2\pi in\cdot (\mathcal{A}^T)^{-1}\gamma}|^2\Phi_{f}^{\mathcal{A}}(\gamma).
\end{align*}
Therefore by Theorem \ref{thkh3..3.5}., there exist $A,B>0$ such that $A\leqslant\Phi_{\tilde{f}}^{\mathcal{A}}(\gamma)\leqslant B$, a.e., $\gamma \in Q_{\mathcal{A}}\setminus N$ where\\
$N=\{\gamma \in Q_{\mathcal{A}}: \Phi_{\tilde{f}}^{\mathcal{A}}(\gamma)=0\}$
\end{proof}
\begin{corollary}
Let $f \in L^2(\mathbb{R}^d)$ and assume that $\tau_{\mathcal{A}}(f)$ is a frame for $V:=\overline{span}\{\tau_{\mathcal{A}}(f)\}$, $n \in \mathbb{Z}^d$. Then $\{T_{\mathcal{A}k}f+T_{\mathcal{A}k+n}f\}_{k \in \mathbb{Z}^d}$ is a frame for $V$,  if and only if $2(\mathcal{A}^T)^{-1}\gamma\cdot n$ is not an odd number,  for $\gamma\in [0,1]^d$.
\end{corollary}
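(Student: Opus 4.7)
The plan is to reduce the corollary to Proposition \ref{lekh3.15} by observing that the given sequence is the $\mathcal{A}$-translates of a single modified generator. Setting $\tilde f := f + T_{\mathcal{A}n}f$, one has for each $k \in \mathbb{Z}^d$
\[
T_{\mathcal{A}k}f + T_{\mathcal{A}(k+n)}f \;=\; T_{\mathcal{A}k}(f+T_{\mathcal{A}n}f) \;=\; T_{\mathcal{A}k}\tilde f,
\]
so the system in question is exactly $\tau_{\mathcal{A}}(\tilde f)$. By Proposition \ref{lekh3.15}, its periodization factors as $\Phi_{\tilde f}^{\mathcal{A}}(\gamma) = g(\gamma)\Phi_f^{\mathcal{A}}(\gamma)$, where
\[
g(\gamma) := \bigl|1+e^{-2\pi i(\mathcal{A}^T)^{-1}n\cdot\gamma}\bigr|^2 = 2+2\cos\bigl(2\pi(\mathcal{A}^T)^{-1}n\cdot\gamma\bigr)
\]
is continuous, $\mathbb{Z}^d$-periodic, and satisfies $0 \le g \le 4$. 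Moreover $g$ vanishes exactly when $e^{-2\pi i(\mathcal{A}^T)^{-1}n\cdot\gamma} = -1$, equivalently when $2(\mathcal{A}^T)^{-1}\gamma\cdot n$ is an odd integer. The corollary is therefore equivalent to the assertion that $\tau_{\mathcal{A}}(\tilde f)$ is a frame for $V$ if and only if $g(\gamma) > 0$ for every $\gamma \in [0,1]^d$.

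For the sufficient direction, suppose $g>0$ on the compact cube $[0,1]^d$. Continuity and compactness give $m := \min_{[0,1]^d} g > 0$, and writing $A_f,B_f$ for the frame bounds of $\tau_{\mathcal{A}}(f)$ supplied by Theorem \ref{thkh3.7}, the factorization yields $m A_f \le \Phi_{\tilde f}^{\mathcal{A}} \le 4 B_f$ a.e.\ off $N_{\Phi_f^{\mathcal{A}}}$. Because $g$ is strictly positive everywhere, $N_{\Phi_f^{\mathcal{A}}} = N_{\Phi_{\tilde f}^{\mathcal{A}}}$, so Theorem \ref{thkh3.7} identifies $\tau_{\mathcal{A}}(\tilde f)$ as a frame sequence with these bounds, and Lemma \ref{lekh3.14} guarantees $\overline{span}\,\tau_{\mathcal{A}}(\tilde f) = V$. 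Combining the two, $\tau_{\mathcal{A}}(\tilde f)$ is a frame for $V$.

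For the converse direction, suppose $\tau_{\mathcal{A}}(\tilde f)$ is a frame for $V$ with constants $A,B$. Theorem \ref{thkh3.7} gives $A \le g(\gamma) \Phi_f^{\mathcal{A}}(\gamma)$ a.e.\ off $N_{\Phi_{\tilde f}^{\mathcal{A}}}$, and dividing by the upper bound $\Phi_f^{\mathcal{A}} \le B_f$ produces the essential lower bound $g \ge A/B_f > 0$ on the support of $\Phi_f^{\mathcal{A}}$. The main obstacle, as I see it, is then promoting this almost-everywhere bound to the pointwise statement $g > 0$ on all of $[0,1]^d$. Continuity of $g$ together with compactness of $[0,1]^d$ handles this: the open set $\{g < A/B_f\}$ would have positive Lebesgue measure were it nonempty, contradicting the essential lower bound. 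Hence $g(\gamma) > 0$ for every $\gamma \in [0,1]^d$, which unwinds to the asserted odd-integer condition.
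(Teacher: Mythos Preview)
Your strategy is exactly the paper's: it declares the corollary an immediate consequence of Lemma~\ref{lekh3.14} and Proposition~\ref{lekh3.15}, and you are simply unpacking that route through Theorem~\ref{thkh3.7}. The forward direction is fine.

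The converse direction, however, has a real gap. The inequality $g\ge A/B_f$ that you extract holds only almost everywhere on the \emph{support} of $\Phi_f^{\mathcal{A}}$, not on all of $[0,1]^d$. Your continuity argument then claims that $\{g<A/B_f\}$ being nonempty and open forces a contradiction with this essential bound; but that contradiction requires the open set to meet the support of $\Phi_f^{\mathcal{A}}$ in positive measure, which you have not established. If a zero of $g$ happens to lie inside a region where $\Phi_f^{\mathcal{A}}$ already vanishes, nothing goes wrong with the frame property. Concretely, take $d=1$, $\mathcal{A}=I$, $\hat f=\chi_{[0,1/4]}$, $n=1$: then $\Phi_f^{\mathcal{A}}=\chi_{[0,1/4]}$ on $[0,1)$, so $\tau_{\mathcal{A}}(f)$ is a Parseval frame sequence; $g(\gamma)=2+2\cos(2\pi\gamma)$ vanishes at $\gamma=1/2\in N_{\Phi_f^{\mathcal{A}}}$, yet $\Phi_{\tilde f}^{\mathcal{A}}=g\cdot\chi_{[0,1/4]}$ is bounded between $2$ and $4$ on its support, and Lemma~\ref{lekh3.14} still gives $\overline{span}\,\tau_{\mathcal{A}}(\tilde f)=V$, so $\tau_{\mathcal{A}}(\tilde f)$ \emph{is} a frame for $V$ even though $2\cdot(1/2)\cdot 1=1$ is odd. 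Thus the ``only if'' direction is actually false as stated without an extra hypothesis such as $N_{\Phi_f^{\mathcal{A}}}$ being null; the paper's one-line proof does not confront this issue either.
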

\begin{proof}
It is an immediate consequence of  Lemma \ref{lekh3.14}. and Lemma  \ref{lekh3.15}..
\end{proof}
We conclude,  with an example that $\tau_{\mathcal{A}}(f)$ is a Parseval frame, but  it is not a Riesz sequences.
\begin{example}
Let $f\in L^2(\mathbb{R}^d)$ and $\hat{f}(\gamma)=\chi_{(\mathcal{A}^T)^{-1}}\big[-\frac13,\frac13\big]^d(\gamma)$
for $\gamma\in \mathbb{R}^d$ then
\begin{align*}
\Phi_{f}^{\mathcal{A}}(\gamma)&= \frac{1}{|\det \mathcal{A}|}\sum_{k\in \mathbb{Z}^d}\big|\chi_{(\mathcal{A}^T)^{-1}[-\frac13,\frac13]^d}\big((\mathcal{A}^T)^{-1}(\gamma+k)\big)\big|^2\\2
&=\chi_{(\mathcal{A}^T)^{-1}}\big[-\frac13,\frac13\big]^d,
\end{align*}
and by Theorem \ref{thkh3.7}., $\tau_{\mathcal{A}}(f)$ is a frame sequence with frame bounds $A=B=1$, but does not form a Riesz sequence.
\end{example}

\bibliographystyle{amsplain}

\begin{thebibliography}{10}
\bibitem{1}
A. Aldroubi and C. Cabrelli and U. Molter, {\it Wavelets on irregular grids with arbitrary dilation matrices, and frame atoms for $L^2(\mathbb{R}^d)$}, Appl. Comput. Harmon. Anal. 17(2004), 119-140.

 \bibitem{2}
 P. Balazs and C. Cabrelli and S. Heinken and U. Molter, {\it Frames by multiplication}, arxiv: 1004. 1429V3 [math. CA] 24 nov (2011).

\bibitem{3}
P. Balazs and S. Heineken, {\it An operator based approach to irregular frames of translates}, MDPI. (2019).

\bibitem{4}
M. Bownik  and E. Weber, {\it Affine frames}, GMRA'S and the canonical dual, to appear in studia Math. (2003).

\bibitem{5}
P. G. Casazza, {\it The art of frame theory}, Taiwanese J. Math. 4 (2000), no. 2, 129--201.


\bibitem{6}
O. Christensen, {\it An Introduction to Frames and Riesz Basis}, Birkh\"{a}user. (2016).

\bibitem{7}
I. Daubechies, A. Grossmann and Y. Mayer, {\it Painless non-orthogonal expansions}, J. Math. Physics, 27 (1986), 1271--1283.


\bibitem{8}
R. J. Duffin and A. C. Schaeffer, {\it A class of non-harmonic Fourier series}, Trans. Amer. Math. Soc. 72 (1952), 341--366.

\bibitem{9}
S.  J. Favier and R. A. Zalik, {\it On stability of frames and Riesz bases}, Appl. Comp. Harm. Anal. 2(1995), 160--173.

\bibitem{10}
G. B. Folland, {\it A Cource in Abstract Harmonic Analysis}, CRC press,  (1995).

\bibitem{11}
G. B. Folland, {\it Real Analysis Modern Techniques and Their Applications}, 2nd ed, (1999).

\bibitem{12}
D. Gabor, {\it Theory of communications}, J. IEEE (london), 93 (1946), no. 3, 429--457.

\bibitem{13}
C. Heil, {\it A Basis Theory Primer}, Birkh\"{a}user, (2011).

\bibitem{14}
C. Heil and D. Walnut, {\it Continuous and discrete wavelet transform}, SIAM. Review,  31 (1989), 628--666.



\end{thebibliography}

\end{document}